\theoremstyle{plain}
\theoremstyle{definition}\newtheorem{theorem}{Theorem}[section]
\theoremstyle{plain}\newtheorem{lemma}[theorem]{Lemma}
\theoremstyle{plain}\newtheorem{coro}[theorem]{Corollary}
\theoremstyle{plain}
\theoremstyle{remark}\newtheorem{remark}{Remark}[section]
\newcommand{\Div}{\mathrm{div}\,}
\newcommand{\B}{\Big}
\newcommand{\s}{\mathrm{div}}
\newcommand{\be}{\begin{equation}}
\newcommand{\ee}{\end{equation}}
 \newcommand{\ba}{\begin{aligned}}
 \newcommand{\ea}{\end{aligned}}
  \newcommand{\f}{\frac}
  \newcommand{\ben}{\begin{enumerate}}
   \newcommand{\een}{\end{enumerate}}
\newcommand{\bv}{{\bm v}}
\newcommand{\bx}{{\bm x}}
\newcommand{\varep}{\varepsilon}
\newcommand{\bl}{{\mbox{\boldmath $\ell$}}}
\newcommand{\bn}{\hat{{\mbox{\boldmath $\ell$}}}}
\newcommand{\bomega}{{\mbox{\boldmath $\omega$}}}
\newcommand{\Rmnum}[1]{\expandafter\@slowromancap\romannumeral #1@}
\numberwithin{equation}{section}
\begin{document}
\title{Four-fifths   laws   in     electron and Hall
magnetohydrodynamic fluids: Energy, Magnetic helicity and Generalized helicity
 }
\author{Yanqing Wang\footnote{   College of Mathematics and   Information Science, Zhengzhou University of Light Industry, Zhengzhou, Henan  450002,  P. R. China Email: wangyanqing20056@gmail.com},   ~  
   Yulin Ye\footnote{Corresponding author. School of Mathematics and Statistics,
		Henan University,
		Kaifeng, 475004,
		P. R. China. Email: ylye@vip.henu.edu.cn}~   and  Otto Chkhetiani\footnote{A. M. Obukhov Institute of Atmospheric Physics, Russian Academy of Sciences, Pyzhevsky per. 3, Moscow, 119017, Russia.  Email: ochkheti@ifaran.ru}  }
\date{}
\maketitle
\begin{abstract}
This paper examines the Kolmogorov type laws of conserved quantities in   the electron and Hall magnetohydrodynamic  fluids. Inspired by Eyink's longitudinal   structure functions
and recent progress  in classical MHD equations, we derive  four-fifths laws for energy,   magnetic helicity  and generalized helicity in these systems.
  \end{abstract}
\noindent {\bf MSC(2020):}\quad 76F02, 76W05, 35L65, 35L67, 35Q35
\\\noindent
{\bf Keywords:} Four-fifths law; EMHD; Hall MHD; energy;    magnetic helicity;    
\section{Introduction}
\label{intro}
\setcounter{section}{1}\setcounter{equation}{0}
Four-fifths   and four-thirds laws in terms of  third-order structure functions in physical space
are well-known few accurate results  in turbulence. They provide an exact measure of the dissipation rate of energy and  go back to the pioneering works by  Kolmogorov in \cite{[Kolmogorov]} and Yaglom in \cite{[Yaglom]}, respectively.  They can be written as
 \be\label{4543}
\langle[\delta \bm{u}_{L}(\bm{r})]^{3} \rangle=-\f45\epsilon \bm{r}, ~~~~ \langle \delta \bm{u}_{L}(\bm{r})[\delta \theta (\bm{r})]^{2}  \rangle=-\f43\epsilon  \bm{r},
 \ee
 where $\epsilon$ is the mean rate of
  kinetic energy dissipation per unit mass of the Navier-Stokes equations with sufficiently high Reynolds numbers and of the  temperature equation.       Here, $\delta \bm{u}_{L}(\bm{r})=\delta \bm{u} (\bm{r})\cdot\f{\bm{r}}{|\bm{r}|}=[\bm{u}(x+\bm{r})-\bm{u}(x)]\cdot\f{\bm{r}}{|\bm{r}|}$ stands for the longitudinal velocity increment and $\langle\cdot\rangle$ denotes the mean value. It is worth remarking that the  temperature $\theta$ can be replaced by the velocity $\bm{u}$ in \eqref{4543}, which can be found in \cite{[MY]}.   The deduction of \eqref{4543} is intimately connected to the  K\'arm\'arth-Howarth equations.   Without assumptions of homogeneity and isotropy,
Kolmogorov's 4/5 law and Yaglom's 4/3 law were reproduced by  Duchon and Robert \cite{[DR]}, and Eyink \cite{[Eyink1]}, respectively,
  \begin{align}
 &S(\bm{v})=-\f43D(\bm{v}), ~~  S_{L}(\bm{v})=-\f45D(\bm{v}),\label{DRE}
\end{align}
where
\begin{align}
\nonumber&S(\bm{v})=\lim\limits_{\lambda\rightarrow0}S(\bm{v},\lambda)=
\lim\limits_{\lambda\rightarrow0}\f{1}{\lambda}\int_{\partial B }\bm{\ell}\cdot\delta \bm{v}(\lambda\bm{\ell})[\delta \bm{v}(\lambda\bm{\ell})]^{2}\f{d\sigma(\bm{\ell}) }{4\pi}, \\
&D(\bm{v})=\lim\limits_{\varepsilon\rightarrow0}D(\bm{v};\varepsilon)=
 \lim\limits_{\varepsilon\rightarrow0}\f14
\int_{\mathbb{T}^{3}}\nabla\varphi_{\varepsilon}(\bm{\bm{\ell}})\cdot\delta \bm{v}(\bm{\ell})[\delta \bm{v}(\bm{\ell})]^{2}d\bm{\ell},\label{drKHMr}\nonumber\\
&S_{L}(\bm{v})=\lim\limits_{\lambda\rightarrow0}S_{L}(\bm{v},\lambda)=
\lim\limits_{\lambda\rightarrow0}\f{1}{\lambda}\int_{\partial B } \bl \cdot\delta \bm{v}(\lambda\bl) [\delta \bm{v}_{L}(\lambda\bl)]^{2}\f{d\sigma(\bl) }{4\pi}.\nonumber
\end{align} Here, $\sigma(\bm{x})$ represents the surface measure on the sphere $\partial B=\{\bm{x}\in \mathbb{R}^{3}: |\bm{x}|=1\}$ and $\varphi$ is  some smooth non-negative function  supported in $\mathbb{T}^{3}$ with unit integral and $\varphi_{\varepsilon}(\bm{x})=\varepsilon^{-3}\varphi(\f{\bm{x}}{\varepsilon})$.
 The   inertial
anomalous dissipation term    $D(\bm{v};\varepsilon)$ in \eqref{DRE}  originating from the  possible  singularity of rough (weak)
solutions   of the Euler equations prevents the energy conservation.
For more application of this kind dissipation term, the reader can be found in \cite{[Dubrulle],[DE],[KAM],[KFD]}.

 There have been extensive study and application of these laws (see \cite{[Drivas],[DGSH],
[Galtier0],[Galtier],[GPP],[HVLFM],[KFD],[KAM],[MS],[MY],[WWY1],[WC],[YRS],
[Eyink1],[FGSMA],[Frisch],
[DR],[Dubrulle],[Chkhetiani1],[GTW],[AB],[BCPW],[Lindborg],[Novack],[Chandrasekhar],[AOAZ],[BG],[Chkhetiani],[Chkhetiani2]} ).
For a recent  detailed review of the   scaling laws for the magnetohydrodynamics
for the energy
transfer in plasma turbulence,
we would like to refer the reader to  recent work  \cite{[MS]} by  Marino  and Sorriso-Valvo.
Indeed, the first third-order exact law for the magnetohydrodynamics fluid is due to  Politano and
Pouquet \cite{[PP1]}, where they showed that
\be\ba\label{pp1}
&\langle \delta \bm{u}_{L}  (\delta \bm{u} )^{2}  \rangle+  \langle \delta \bm{u}_{L} (\delta \bm{b}  )^{ 2}  \rangle  -2\langle \delta \bm{b}_{L} (\delta \bm{b}  \cdot\delta \bm{u}  )
\rangle =-\f43\epsilon_{E}\bm{r},\\
&2\langle \delta \bm{u}_{L} (\delta \bm{b}  \cdot\delta \bm{u}  )
\rangle- \langle \delta \bm{b}_{L} (\delta \bm{b}  )^{2}   \rangle-\langle \delta \bm{b}_{L} (\delta \bm{u} )^{2} \rangle =-\f43\epsilon_{C} \bm{r}.
\ea\ee
where $\epsilon_{E}$  and $\epsilon_{C}$ stands for
  the mean dissipation rates of total energy and cross-helicity. Simultaneously, Politano and
Pouquet \cite{[PP2]} also deduced the following relations
 \be\label{pp2}\ba
&\langle[\delta \bm{u}_{L}({\bm r})]^{3} \rangle- 6\langle {\bm b}^{2}_{L}\delta \bm{u}_{L}({\bm r})  \rangle=-\f45\epsilon_{E} \bm{r},\\
&\langle[\delta {\bm b }_{L}({\bm r})]^{3} \rangle- 6\langle {\bm b}^{2}_{L}\delta \bm{u}_{L}({\bm r})  \rangle=-\f45\epsilon_{C} \bm{r},
\ea
 \ee
 (see also \cite{[Chandrasekhar]}). Here, $\bm{u}$ and $\bm{b}$ describe the flow velocity field and the magnetic field, respectively.
 Recently, making full use of Eyink's longitudinal structure functions in \cite{[Eyink1]} below
 \be\label{eyinksp}
\ba
&  \bm{u}_{L}(\bx, t,\bl)=(\bn\otimes \bn)  \bm{u}(\bx,t,\bl),~~~~\delta \bm{u}_{T}(\bx, t,\bl)=({\bf 1}-\bn\otimes \bn)  \bm{u}(\bx,t,\bl),
\ea\ee
and the
 it was shown   in \cite{[YWC]} that, for magnetized fluids, there hold
 \be\ba
 S_{EL}(\bm{u}, \bm{b})=-\f45 D_{E}(\bm{u},\bm{b}),~~
 S_{C  L}(\bm{u}, \bm{b})=-\f45 D_{CH}(\bm{u},\bm{b}),
\ea\ee
which correspond  to
\be\ba\label{wyc}
&\langle \delta \bm{u}_{L}  [\delta \bm{u}_{L}|^{2}  \rangle+  \langle \delta \bm{u}_{L} [\delta \bm{b}_{L} |^{ 2}  \rangle  -2\langle \delta \bm{b}_{L} (\delta \bm{b}_{L} \cdot\delta \bm{u}_{L} )
\rangle-\f45\langle \delta \bm{b}_{L}(\delta \bm{b} \cdot\delta \bm{v} )  \rangle+\f45\langle \delta \bm{v}_{L}[\delta \bm{b} | ^{2}  \rangle=-\f45\epsilon_{E} \bm{r},\\
&2\langle \delta \bm{u}_{L} (\delta \bm{b}_{L} \cdot\delta \bm{u}_{L} )
\rangle- \langle \delta \bm{b}_{L} [\delta \bm{b}_{L} |^{2}   \rangle-\langle \delta \bm{b}_{L} ([\delta \bm{u}_{L}|^{2} )-\f45\langle \delta \bm{b}_{L} ([\delta \bm{u} |^{2} )
\rangle+\f45\langle \delta \bm{v}_{L} ( \delta \bm{u}\cdot \delta \bm{b}  )
\rangle=-\f45\epsilon_{C} \bm{r}.
\ea\ee
The analysis of  interaction of different physical quantities plays an important role in   the derivation of Kolmogorov type law in \cite{[YWC]}.
Note that all the above exact scaling relations   \eqref{pp1}-\eqref{wyc}   rely on the traditional MHD equations.
The standard  MHD approximation describes the macroscopic evolution of an electrically conducting single fluid   and
is not satisfied for
   description of small-scale magnetized plasmas.
Actually,  the   electron (EMHD) and Hall (Hall-MHD) magnetohydrodynamic   systems are more effective  than the standard MHD model at the ion inertial length scale, where the motion of the ions can be neglected and the electrons remain quasi-neutrality. The Hall term is predominant in this range. As a result, the EMHD and Hall-MHD equations are widely used in   solar wind,   crust of neutron stars, plasma solids, tokamaks, and  magnetic reconnection(see \cite{[ZYZ],
[Vainshtein73],[Lyut13],[Biskamp99],[Chen16],[Cho11],[ChoLaz],
[Galtier1],[Galtier2],[GP92],[GKR],[KCY],[KP22],[PFLVMH]}). The exact relations of invariant  quantities in EMHD equations and Hall-MHD equations have attracted considerable attention       in the past two decades (see \cite{[BG],[Chkhetiani2],[DGSH],[FGSMA],[Galtier0],[Galtier],[HVLFM],
[WC]}  and references therein). In the case of the EMHD equations, the 4/5 law and 5/12 law of
helicity  were discovered by Chkhetiani in \cite{[Chkhetiani]}. Regarding the case of the Hall-MHD equations,
Galtier deduced
the von K\'arm\'an-Howarth equations for the Hall-MHD flows  and   exact
scaling laws for the third-order correlation tensors   in \cite{[Galtier0]}.
Hellinger,  Verdini,  Landi, Franci  and  Matteini  \cite{[HVLFM]} derived
the Yaglom's 4/3  law of energy  (see also \cite{[FGSMA],[WC]}) in the Hall-MHD equations. Besides,
the  four-thirds law      of  magnetic helicity in EMHD and Hall-MHD systems can be found in \cite{[WC]}.
However, to the best knowledge of authors, limited work has been done in four-fifth    laws of energy in the EMHD and Hall-MHD equations. The objective of this current paper is to consider this issue. For the convenience of presentation,  let $\bm{J}=\nabla\times \bm{b}$ denote the electric current. In what follows, we begin with invocation of  Eyink's longitudinal structure functions \eqref{eyinksp} and study the exact relations of conserved quantities in  electron and Hall magnetohydrodynamic
fluids. The first 4/5 law of energy in the EMHD  equations can be formulated as follows.
\begin{theorem}\label{the1.1}
 Suppose that   $\bm{b}$ satisfy
  the following inviscid EMHD  equations
\be\label{EMHD} \partial_{t}\bm{b} +
 \text{d}_{\text{I}}\nabla\times[(\nabla\times \bm{b})\times \bm{b} ]  =0.
 \ee
where $\text{d}_{\text{I}}$ is the ion inertial length.
Then, there hold the following local longitudinal and transverse K\'arm\'arth-Howarth equations for energy, respectively
\be\ba\label{1.12}
 & \partial_{t}(\bm{b}_{L}^{\varepsilon}\cdot\bm{b})+\f{\text{d}_{\text{I}}}{2}\s\B\{[\s(\bm{b} \otimes \bm{b}_{L})]^{\varepsilon}\times \bm{b}\B\}+\f{\text{d}_{\text{I}}}{2}\s\B\{[\s(\bm{b} \otimes \bm{b})]\times \bm{b}_{L}^{\varepsilon}\B\}+\f{\text{d}_{\text{I}}}{2}\s[\bm{b} (\bm{b}\cdot \bm{J}_{L}^{\varepsilon})] \\&+\f{\text{d}_{\text{I}}}{2}\s[\bm{b} (\bm{b}_{L}^{\varepsilon}\cdot \bm{J})]- \f{\text{d}_{\text{I}}}{2}\s[\bm{J} (\bm{b}\cdot \bm{b}_{L}^{\varepsilon})]
-
\f{\text{d}_{\text{I}}}{4}
\s \B[\bm{J}(\bm{b}_L\cdot\bm{b}_L)\B]^\varep +\f{\text{d}_{\text{I}}}{4}\s\B[\bm{J}(\bm{b}_L\cdot\bm{b}_L)^\varep\B] \\&+\f{\text{d}_{\text{I}}}{2}\s\B[\bm{b} (\bm{J}_L\cdot \bm{b}_L)\B]^\varep-\f{\text{d}_{\text{I}}}{2}\s \B[\bm{b}(\bm{J}_L\cdot \bm{b}_L)^\varep\B]
 =
 -\f{2}{3}D_{EL}^{\varepsilon}(\bm{b},\bm{J}),
\ea\ee
and
\be\ba\label{1.13}
 & \partial_{t}(\bm{b}_{T}^{\varepsilon}\cdot\bm{b})
 +\f{\text{d}_{\text{I}}}{2}\s\B\{[\s(\bm{b} \otimes \bm{b}_{T})]^{\varepsilon}\times \bm{b}\B\}+\f{\text{d}_{\text{I}}}{2}\s\B\{[\s(\bm{b} \otimes \bm{b})]\times \bm{b}_{T}^{\varepsilon}\B\}+\f{\text{d}_{\text{I}}}{2}\s[\bm{b} (\bm{b}\cdot \bm{J}_{T}^{\varepsilon})] \\&+\f{\text{d}_{\text{I}}}{2}\s[\bm{b} (\bm{b}_{T}^{\varepsilon}\cdot \bm{J})]- \f{\text{d}_{\text{I}}}{2}\s[\bm{J} (\bm{b}\cdot \bm{b}_{T}^{\varepsilon})]
 - \f{\text{d}_{\text{I}}}{4}\s \B[\bm{J}(\bm{b}_T\cdot\bm{b}_T)\B]^\varep+\f{\text{d}_{\text{I}}}{4}\s\B[\bm{J}(\bm{b}_T\cdot\bm{b}_T)^\varep\B]
\\&+\f{\text{d}_{\text{I}}}{2} \s\B[\bm{b} (\bm{J}_T\cdot \bm{b}_T)\B]^\varep-\f{\text{d}_{\text{I}}}{2} \s [\bm{b}(\bm{J}_T\cdot \bm{b}_T)^\varep]
= -\f43D_{ET}^{\varepsilon}(\bm{b},\bm{J}).
\ea\ee
where the dissipation terms (K\'arm\'an-Howarth-Monin type relation) are defined by
$$\ba
 D_{EL}^{\varepsilon}(\bm{b},\bm{J})
=&\f{3\text{d}_{\text{I}}}{4 }\int_{\mathbb{T}^{3}} \nabla \varphi^\varep(\ell)\cdot \delta \bm{b}(\bl)(\delta \bm{J}_L \cdot \delta \bm{b}_L)\\&+\f{2}{|\bl|}\varphi^\varep(\ell)\B[\bn\cdot \delta \bm{b}(\bl)(\delta \bm{J}_T\cdot \bm{b}_T)-\delta \bm{b}(\delta \bm{J} \cdot \delta \bm{b})+\delta \bm{J} (\delta \bm{b}\cdot \delta \bm{b})  \B]  d^3 \bl\\
&-\f{3\text{d}_{\text{I}}}{8 }\int_{\mathbb{T}^{3}} \nabla \varphi^\varep(\ell)\cdot \delta \bm{J} (\bl)[\delta \bm{b}_L(\bl)]^2+\f{2}{|\bl|}\varphi^\varep(\ell)\bn\cdot  \delta \bm{J} (\bl)[\delta \bm{b}_T(\bl)]^2  d^3\bl.
\ea$$
and
$$\ba
 D_{ET}^{\varepsilon}(\bm{b},\bm{J})
 =&-\f{3\text{d}_{\text{I}}}{16}\int_{\mathbb{T}^{3}} \nabla \varphi^\varep(\ell)\cdot \delta \bm{J}(\bl)[\delta \bm{b}_T]^2-\f{2}{|\bl|}\varphi^\varep\bn\cdot \delta \bm{J} [\delta \bm{b}_T]^2  d^3\bl\\  &+\f{3\text{d}_{\text{I}}}{8}      \int_{\mathbb{T}^{3}} \nabla \varphi^\varep(\ell)\cdot \delta \bm{b}(\bl)(\delta \bm{J}_T\cdot \delta \bm{b}_T)-\f{2}{|\bl|}\varphi^\varep\bn\cdot \delta\bm{b} (\delta\bm{J}_T\cdot \delta\bm{b}_T)d^3\bl\\&+\f{3\text{d}_{\text{I}}}{8}      \int_{\mathbb{T}^{3}} \f{2}{|\bl|}\bn\cdot\B[\delta \bm{b}(\delta\bm{b}\cdot \delta\bm{J})-\delta \bm{J}(\delta \bm{b}\cdot \delta \bm{b})\B] d^3\bl.
\ea$$	
	In addition, if suppose that for any $1<m,n<\infty, 3\leq p,q<\infty$ with $\f{2}{p}+\f{1}{m}=1$ and $\f{2}{q}+\f{1}{n}=1$ such that $(\bm{b}, \bm{J})$ satisfies
	\be\label{a8}
	\bm{b}\in L^p(0,T;L^q(\mathbb{T}^3))~\text{and}~\bm{J}\in L^m(0,T;L^n(\mathbb{T}^3)).
	\ee	
	Then the function $D_{EX}^\varep(\bm{b},\bm{j})$ with $X=L~\text{or}~T$ converges to a distribution $D_{E}(\bm{b},\bm{J})$ in the sense of distributions as $\varep\to 0$, and $D_{E}(\bm{b},\bm{J})$ satisfies the local equation of energy
\be\label{1.11}
 \partial_{t}(\f12|\bm{b}|^{2})  +\f{\text{d}_{\text{I}}}{2}\s( [\s(\bm{b} \otimes \bm{b})\times \bm{b}])
-\f{\text{d}_{\text{I}}}{4}\s (\bm{J}|\bm{b}|^{2})+\f{\text{d}_{\text{I}}}{2}\s[\bm{b}(\bm{J}\cdot \bm{b})] =D_{E}(\bm{b},\bm{J}),
\ee
Furthermore, there hold the following 4/5 law for the energy
\be\label{45helicity}
S_{EL} (\bm{b},\bm{J})= -\f45 D_{E}(\bm{b},\bm{J}),
\ee
and 8/15 law
$$S_{ET}(\bm{b},\bm{J})= -\f{8}{15} D_{E}(\bm{b},\bm{J}),
$$
where $$S_{EX}(\bm{b},\bm{J})= \lim\limits_{\lambda\to 0}S_{EX}(\bm{b},\bm{J},\lambda),~\text{with}~X=L,T,$$
and
$$\ba
S_{EL}(\bm{b},\bm{J},\lambda)
=&
 \f{1}{\lambda}\int_{\partial B }  \bn \cdot \B[ \delta \bm{b}(\lambda\bl)[\delta \bm{J}_L(\lambda\bl)\cdot \delta \bm{b}_L(\lambda\bl)]-\f12\delta \bm{J}(\lambda\bl)[\delta \bm{b}_T(\lambda\bl)]^2 \B]\f{d\sigma(\bl) }{4\pi}\\&-\f25 \f{1}{\lambda}\int_{\partial B } \bn \cdot\B[\delta\bm{J} (\lambda\bl) [\delta\bm{b}(\lambda\bl) |^{2}-\delta\bm{b}(\lambda\bl)   (\delta\bm{J}(\lambda\bl) \cdot \delta\bm{b}(\lambda\bl) ) \B] \f{d\sigma(\bl) }{4\pi},
\\  S_{ET}(\bm{b},\bm{J},\lambda)
=&
 \f{1}{\lambda}\int_{\partial B } \bn \cdot \B[ \delta \bm{b}(\lambda\bl)[\delta \bm{J}_T(\lambda\bl)\cdot \delta \bm{b}_T(\lambda\bl)]-\f12\delta \bm{J}(\lambda\bl)[\delta \bm{b}_T(\lambda\bl)]^2\B] \f{d\sigma(\bl) }{4\pi}\\&+\f25 \f{1}{\lambda}\int_{\partial B } \bn \cdot \B[\delta\bm{J} (\lambda\bl) [\delta\bm{b}(\lambda\bl) |^{2}-\delta\bm{b}(\lambda\bl)   [\delta\bm{J}(\lambda\bl) \cdot \delta\bm{b}(\lambda\bl) ]\B] \f{d\sigma(\bl) }{4\pi}.
\ea$$
\end{theorem}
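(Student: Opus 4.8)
The plan is to follow the Duchon--Robert/Eyink mollification scheme, now adapted to the third-order EMHD nonlinearity and to Eyink's longitudinal--transverse decomposition \eqref{eyinksp}. First I would recast the flux in tractable form: using the identity $\nabla\times(\bm{J}\times\bm{b})=(\bm{b}\cdot\nabla)\bm{J}-(\bm{J}\cdot\nabla)\bm{b}+\bm{J}(\nabla\cdot\bm{b})-\bm{b}(\nabla\cdot\bm{J})$ together with $\nabla\cdot\bm{b}=0$ and $\nabla\cdot\bm{J}=\nabla\cdot(\nabla\times\bm{b})=0$, equation \eqref{EMHD} becomes $\partial_{t}\bm{b}+\text{d}_{\text{I}}[(\bm{b}\cdot\nabla)\bm{J}-(\bm{J}\cdot\nabla)\bm{b}]=0$, an induction-type transport structure for which the energy $\tfrac12|\bm{b}|^2$ is formally conserved since $\bm{J}\cdot(\bm{J}\times\bm{b})=0$. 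Mollifying against $\varphi^\varep$ and pairing with $\bm{b}$ (respectively with the longitudinal/transverse fields built from \eqref{eyinksp}) produces a local balance whose left side is a time derivative plus spatial divergences and whose right side is a residual trilinear form.

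The core of the argument is the algebraic reduction turning this residual into the increment expressions that define $D_{EL}^\varep$ and $D_{ET}^\varep$, yielding \eqref{1.12} and \eqref{1.13}. Here I would use the commutator bookkeeping of Constantin--E--Titi/Duchon--Robert: each product $f^\varep g$ is rewritten through a density $\int\varphi^\varep(\bl)\,\delta f(\bl)\,\delta g(\bl)\,d\bl$ plus genuine divergences, and the cubic flux is expressed entirely via $\delta\bm{b}(\bl)$ and $\delta\bm{J}(\bl)$; projecting the increments onto $(\bn\otimes\bn)$ and $(\mathbf{1}-\bn\otimes\bn)$ then separates the flux into the longitudinal and transverse K\'arm\'an--Howarth identities. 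I expect this bookkeeping — correctly isolating which groupings are exact divergences (hence negligible in the limit) and which survive as $D_{EX}^\varep$, while faithfully tracking the mixed $\bm{b}$--$\bm{J}$ terms created by the extra curl — to be the main obstacle; it is what makes the statements of \eqref{1.12}/\eqref{1.13} so lengthy, and it is markedly more delicate than the classical Euler/MHD case because $\bm{J}=\nabla\times\bm{b}$ carries one derivative.

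With the local balances in hand I would pass to the limit $\varep\to0$. The hypothesis \eqref{a8}, namely $\bm{b}\in L^p_tL^q_x$, $\bm{J}\in L^m_tL^n_x$ with $\tfrac2p+\tfrac1m=1$ and $\tfrac2q+\tfrac1n=1$, is exactly the H\"older scaling placing the trilinear density $\bm{b}\,\bm{b}\,\bm{J}$ in $L^1_{t,x}$, so the divergence fluxes on the left of \eqref{1.12}/\eqref{1.13} converge to spatial divergences of $L^1$ fields and drop out against test functions. Simultaneously each $D_{EX}^\varep$ converges in the sense of distributions; the key point is that the longitudinal and transverse residuals differ only by terms that are exact divergences or vanish under \eqref{a8}, so both limits coincide with a single distribution $D_{E}(\bm{b},\bm{J})$, which then satisfies the limiting local energy equation \eqref{1.11}.

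Finally, for the scaling laws I would specialize $\varphi$ to a radial mollifier and evaluate $D_{E}$ in polar coordinates $\bl=r\bn$, using $\nabla\varphi^\varep(\bl)=\bn\,\partial_r\varphi^\varep$. Writing each increment product as a radial factor times an angular factor and extracting the leading small-scale behaviour reduces $D_{E}$ to a sphere average against $\bn\cdot(\cdots)$; comparison with the definitions of $S_{EL}$ and $S_{ET}$ identifies the limits, the extra $\pm\tfrac25$-weighted terms in those definitions being arranged precisely so the projected averages collapse to clean constants. The constants themselves come from the spherical tensor averages $\int_{\partial B}\bn_i\bn_j\,\tfrac{d\sigma}{4\pi}=\tfrac13\delta_{ij}$ and $\int_{\partial B}\bn_i\bn_j\bn_k\bn_l\,\tfrac{d\sigma}{4\pi}=\tfrac{1}{15}(\delta_{ij}\delta_{kl}+\delta_{ik}\delta_{jl}+\delta_{il}\delta_{jk})$, which split the full third-order flux into longitudinal and transverse parts in the ratio $\tfrac35:\tfrac25$. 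This gives $S_{EL}=-\tfrac45D_{E}$ and $S_{ET}=-\tfrac{8}{15}D_{E}$, consistent with $S_{EL}+S_{ET}=-\tfrac43D_{E}$, the expected full four-thirds relation.
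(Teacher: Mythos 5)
Your outline reproduces the broad Duchon--Robert/Eyink strategy that the paper follows, but as written it stops short of the two steps that carry essentially all of the mathematical content, so it is a plan rather than a proof. First, you never supply the algebraic identity that makes the longitudinal/transverse splitting work. Because $\bm{b}_L$ and $\bm{b}_T$ involve the $\bl$-dependent projector $\bn\otimes\bn$, the integration by parts in $\bl$ that converts commutator terms into increment integrals produces derivatives $\partial_{\ell_k}(\bn_i\bn_j)=\f{1}{|\bl|}\big[(\delta^{ik}-\bn_i\bn_k)\bn_j+(\delta^{jk}-\bn_j\bn_k)\bn_i\big]$, and the whole point of Lemma~\ref{lemma2.1} (identity \eqref{2.9}) and Lemma~\ref{lemma2.2} is to show that these extra contributions collapse into the specific $\f{2}{|\bl|}\varphi^\varep\bn\cdot\big[\delta\bm{E}(\delta\bm{E}\cdot\delta\bm{F})-\delta\bm{F}(\delta\bm{E}\cdot\delta\bm{E})\big]$ corrections appearing in $D_{EL}^\varep$ and $D_{ET}^\varep$. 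You flag this bookkeeping as ``the main obstacle'' but do not resolve it; without it you cannot even write down the right-hand sides of \eqref{1.12}--\eqref{1.13}. Second, you use only the transport form of the Hall term, whereas the paper's derivation hinges on averaging the two equivalent representations \eqref{non1} and \eqref{non2} with weight $\tfrac12$ each (see \eqref{3.7}); this is what generates the particular mix of $\delta\bm{b}(\delta\bm{J}_L\cdot\delta\bm{b}_L)$ and $\delta\bm{J}[\delta\bm{b}_L]^2$ terms with coefficients $\tfrac34$ and $-\tfrac38$ in $D_{EL}^\varep$, and it is not recoverable from the single form you chose.

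Two further points would fail as stated. In the limit $\varep\to0$ the divergence fluxes do \emph{not} ``drop out against test functions'': they converge to the divergence terms of \eqref{1.11}, and the identification of $\lim D_{EL}^\varep=\lim D_{ET}^\varep=D_E$ comes from the quantitative limits $\bm{b}_L^\varep\to\tfrac13\bm{b}$ and $\bm{b}_T^\varep\to\tfrac23\bm{b}$ (via \eqref{1/3}, as in \eqref{0315}--\eqref{0316}), which also fix the prefactors $-\tfrac23$ and $-\tfrac43$; omitting these factors loses the constants you need. Finally, your derivation of the $4/5$ and $8/15$ constants from the fourth-order spherical average $\int_{\partial B}\bn_i\bn_j\bn_k\bn_l\,d\sigma/4\pi$ presupposes isotropy of the structure functions, which the theorem does not assume; the paper instead evaluates both limits $\lim D_{EL}^\varep$ and $\lim D_{ET}^\varep$ by the coarea formula and radial integration by parts ($\int_0^\infty r^3\varphi'(r)\,dr=-3\int_0^\infty r^2\varphi(r)\,dr$), obtaining the linear system \eqref{3.23} in $\bar S_{EL},\bar S_{ET},\bar S_{E}$ whose elimination produces exactly the $-\tfrac25$-weighted cross terms in the definition of $S_{EL}$ and the factors $-\tfrac54$ and $-\tfrac{15}{8}$. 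That elimination step, which explains where the unusual combinations in $S_{EL}$ and $S_{ET}$ come from, is absent from your proposal.
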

\begin{remark} This theorem extends Kolmogorov type 4/5 law of energy from hydrodynamic fluids to electron magnetohydrodynamics turbulence.
\end{remark}
\begin{remark}The four-fifths law  \eqref{45helicity}
 corresponds to
$$\langle\delta\bm{b}_{L} (\delta\bm{J}_{L}\cdot \delta\bm{b}_{L})\rangle-\f12\langle\delta\bm{J}_{L} (\delta\bm{b}_{L})^{2}\rangle-\f25\langle\delta\bm{J}_{L} (\delta\bm{b} )^{2}\rangle+\f25\langle\delta\bm{b}_{L}  (\delta\bm{J} \cdot \delta\bm{b} )\rangle=-\f45\epsilon_{E} r.$$
\end{remark}\begin{remark}According  to vector triple product formula, we can reformulate
$S_{EL}(\bm{b},\bm{J},\lambda)$ and $S_{EL}(\bm{b},\bm{J},\lambda)$ in terms of $\delta\bm{b}\times(\delta\bm{J}\times\delta\bm{b})$.
\end{remark}
  The nonlinear term of the EMHD equations \eqref{EMHD} is a  Hall term involving
second order derivatives rather than the convection term based on
one order derivatives in the Euler and the standard MHD equations in \cite{[YWC]}. Hence, it seems that the analysis of the Hall term is much more difficult. To this end, we shall use the different   equivalent forms of the Hall term and the identity \eqref{2.9} observed in \cite{[YWC]} to deal with the interaction between the   magnetic field and  the electric current.   As a byproduct,
this together with the recent four-fifths law of total  energy in the conventional MHD equations in \cite{[YWC]} leads to the scaling exact law of total energy in the Hall-MHD equations. The verification   is left to the reader.
\begin{coro}
 Suppose that the   triplet $(\bm{v},\bm{b}, \Pi)$  satisfy the following hall-MHD equations
\be\label{hallMHD}\left\{\ba
&\partial_{t} \bm{u}  +\s (\bm{u}\otimes \bm{u} )  -\s (\bm{b}\otimes \bm{b} )  + \nabla P =0, \\
&\partial_{t} \bm{b} +\s (\bm{u}\otimes \bm{b} )   -\s (\bm{b}\otimes \bm{ u} ) +
 \text{d}_{\text{I}}\nabla\times[(\nabla\times \bm{b})\times \bm{b} ] =0, \\
&\Div \bm{u} =\Div \bm{b} =0,
 \ea\right.\ee
where $ \bm{u}$, $\bm{b}$ and $\Pi$ stand for the veolcity, magnetic field and the pressure of the fluid, respectively. Then, there hold the following local longitudinal and transverse K\'arm\'arth-Howarth equations for energy,
  $$\ba
 &\partial_{t}( \bm{u}_{L}^{\varepsilon}\cdot\bm{u} +\bm{b}\cdot\bm{b}_{L}^{\varepsilon})+\s\B[\bm{u}( \bm{u}\cdot \bm{u}_{L }^{\varepsilon})-\bm{b}(\bm{b} \cdot \bm{u}_{L }^{\varepsilon})
 +\bm{u}(\bm{b} \cdot \bm{b}_{L }^{\varepsilon})
 -\bm{b}(\bm{u} \cdot \bm{b}_{L }^{\varepsilon})\B]
 \\&+\s\B[P_L^\varep \bm{u}+P \bm{u}_L^\varep\B]
 -\s\B[\big(\bm{b} (\bm{u}_L\cdot \bm{b}_L)\big)^\varep-\bm{b}(\bm{u}_L\cdot \bm{b}_L)^\varep\B]\\& +\f12\s \B[\big(\bm{u}(\bm{u}_L\cdot\bm{u}_L)\big)^\varep-\bm{u}(\bm{u}_L\cdot\bm{u}_L)^\varep\B]+\f12
 \s \B[\big(\bm{u}(\bm{b}_L\cdot\bm{b}_L)\big)^\varep
 -\bm{u}(\bm{b}_L\cdot\bm{b}_L)^\varep\B]
 \\&+\f{\text{d}_{\text{I}}}{2}\s\B\{[\s(\bm{b} \otimes \bm{b}_{L})]^{\varepsilon}\times \bm{b}\B\}+\f{\text{d}_{\text{I}}}{2}\s\B\{[\s(\bm{b} \otimes \bm{b})]\times \bm{b}_{L}^{\varepsilon}\B\}+\f{\text{d}_{\text{I}}}{2}\s[\bm{b} (\bm{b}\cdot \bm{J}_{L}^{\varepsilon})] \\&+\f{\text{d}_{\text{I}}}{2}\s[\bm{b} (\bm{b}_{L}^{\varepsilon}\cdot \bm{J})]- \f{\text{d}_{\text{I}}}{2}\s[\bm{J} (\bm{b}\cdot \bm{b}_{L}^{\varepsilon})]
 -
 \f{\text{d}_{\text{I}}}{4}
 \s \B[\bm{J}(\bm{b}_L\cdot\bm{b}_L)\B]^\varep +\f{\text{d}_{\text{I}}}{4}\s\B[\bm{J}(\bm{b}_L\cdot\bm{b}_L)^\varep\B] \\&+\f{\text{d}_{\text{I}}}{2}\s\B[\bm{b} (\bm{J}_L\cdot \bm{b}_L)\B]^\varep-\f{\text{d}_{\text{I}}}{2}\s \B[\bm{b}(\bm{J}_L\cdot \bm{b}_L)^\varep\B] =  -\f{2}{3}D^{\varepsilon}_{EL}(\bm{b},\bm{J});
\ea$$
and
$$\ba &\partial_{t}( \bm{u}_{T}^{\varepsilon}\cdot\bm{u} +\bm{b}\cdot\bm{b}_{T}^{\varepsilon})+\s\B[\bm{u}( \bm{u}\cdot \bm{u}_{T }^{\varepsilon})
-\bm{b}(\bm{b} \cdot \bm{u}_{T }^{\varepsilon})
+\bm{u}(\bm{b} \cdot \bm{b}_{T }^{\varepsilon})
-\bm{b}(\bm{u} \cdot \bm{b}_{T }^{\varepsilon})\B]
\\&+\s\B[P_T^\varep \bm{u}+P \bm{u}_T^\varep\B]
-\text{div} \B[\big(\bm{b}(\bm{u}_T\cdot \bm{b}_T)\big)^\varep-\bm{h}(\bm{u}_T\cdot \bm{b}_T)^\varep\B]\\& +\f12\s \B[\big(\bm{u}(\bm{u}_T\cdot\bm{u}_T)\big)^\varep-\bm{u}(\bm{u}_T\cdot\bm{u}_T)^\varep\B]+\f12\s \B[\big(\bm{u}(\bm{b}_T\cdot\bm{b}_T)\big)^\varep
-\bm{u}(\bm{b}_T\cdot\bm{b}_T)^\varep\B] \\&+\f{\text{d}_{\text{I}}}{2}\s\B\{[\s(\bm{b} \otimes \bm{b}_{T})]^{\varepsilon}\times \bm{b}\B\}+\f{\text{d}_{\text{I}}}{2}\s\B\{[\s(\bm{b} \otimes \bm{b})]\times \bm{b}_{T}^{\varepsilon}\B\}+\f{\text{d}_{\text{I}}}{2}\s[\bm{b} (\bm{b}\cdot \bm{J}_{T}^{\varepsilon})] \\&+\f{\text{d}_{\text{I}}}{2}\s[\bm{b} (\bm{b}_{T}^{\varepsilon}\cdot \bm{J})]- \f{\text{d}_{\text{I}}}{2}\s[\bm{J} (\bm{b}\cdot \bm{b}_{T}^{\varepsilon})]
- \f{\text{d}_{\text{I}}}{4}\s \B[\bm{J}(\bm{b}_T\cdot\bm{b}_T)\B]^\varep+\f{\text{d}_{\text{I}}}{4}\s\B[\bm{J}(\bm{b}_T\cdot\bm{b}_T)^\varep\B]
\\&+\f{\text{d}_{\text{I}}}{2} \s\B[\bm{b} (\bm{J}_T\cdot \bm{b}_T)\B]^\varep-\f{\text{d}_{\text{I}}}{2} \s [\bm{b}(\bm{J}_T\cdot \bm{b}_T)^\varep] =-\f43D_{ET}^{\varepsilon}(\bm{b},\bm{J});
   \ea$$
 In addition, if suppose that   $(\bm{u}, \bm{b})$ satisfies
	\be\label{1.19}
	\bm{u},\bm{b}\in L^3(0,T;L^3(\mathbb{T}^3)),\bm{b}\in L^p(0,T;L^q(\mathbb{T}^3))~\text{and}~\bm{J}\in L^m(0,T;L^n(\mathbb{T}^3)),
	\ee	where
$1<m,n<\infty, 3\leq p,q<\infty$ with $\f{2}{p}+\f{1}{m}=1$ and $\f{2}{q}+\f{1}{n}=1$.
Then the function $D_{EX}^\varep(\bm{u},\bm{b})$ with $X=L,T$   converges to a distribution $D_{E}(\bm{u},\bm{b})$ in the sense of distributions as $\varep\to 0$, and $D_{E}(\bm{u},\bm{b})$ satisfies the local equation of total energy
\be \ba\label{1.18}
&\partial_{t}(\f{|\bm{u}|^{2}+|\bm{b}|^{2}}{2}  )  +\s\B[\bm{u}\B( \f12(|\bm{u}|^{2}+|\bm{b} |^{2})+\Pi\B)-\bm{b}(\bm{b}\cdot \bm{u})\B] \\& +\f{\text{d}_{\text{I}}}{2}\s( [\s(\bm{b} \otimes \bm{b})\times \bm{b}]
-\f{\text{d}_{\text{I}}}{4}\s (\bm{J}|\bm{b}|^{2})+ \f{\text{d}_{\text{I}}}{2}\s[\bm{b}(\bm{J}\cdot \bm{b})] =-D_{E}(\bm{u},\bm{b},\bm{J}),
 \ea\ee

Furthermore, we have the following 4/5 law and 8/15 laws
\begin{align} \label{mhde45}S_{EL}(\bm{u}, \bm{b})=-\f45 D_{E}(\bm{u},\bm{b},\bm{J}),~ S_{ET}(\bm{u}, \bm{b})=-\f{8}{15}D_{E}(\bm{u},\bm{b},\bm{J}),
\end{align}
where $$S_{EX}(\bm{u},\bm{b})= \lim\limits_{\lambda\to 0}S_{EX}(\bm{u},\bm{b},\lambda),~\text{with}~X=L,T,  $$
and
 $$\ba
& S_{EL}(\bm{u}, \bm{b},\lambda)\\
  = &\f{1}{\lambda}\int_{\partial B } \bn \cdot \B[ \delta \bm{v}(\lambda\bl)|\B(\delta \bm{v}_L(\lambda\bl)]^2+[\delta \bm{b}_L(\lambda\bl)]^2\B)-2\delta \bm{b}(\bl)\B(\delta \bm{v}_L(\lambda\bl) \cdot \delta \bm{b}_L(\lambda\bl)\B)\B] \f{d\sigma(\bl) }{4\pi}\\  &+\f45 \f{1}{\lambda}\int_{\partial B } \bn \cdot \B[ \delta \bm{b}(\lambda\bl) \B(\delta\bm{b}(\lambda\bl)\cdot\delta \bm{v}(\lambda\bl)\B)-\delta \bm{v}(\lambda\bl) | \delta\bm{b}(\lambda\bl)]^{2}\B] \f{d\sigma(\bl) }{4\pi}\\&+ \f{1}{\lambda}\int_{\partial B }  \bl \cdot \B[ \delta \bm{b}(\lambda\bl)(\delta \bm{J}_L(\lambda\bl)\cdot \delta \bm{b}_L(\lambda\bl))-\f12\delta \bm{J}(\lambda\bl)[\delta \bm{b}_T(\lambda\bl)]^2 \B]\f{d\sigma(\bl) }{4\pi}\\&-\f25 \f{1}{\lambda}\int_{\partial B } \bl \cdot\B[\delta\bm{J} (\lambda\bl) [\delta\bm{b} (\lambda\bl)]^{2}-\delta\bm{b}  (\lambda\bl) (\delta\bm{J} \cdot \delta\bm{b} (\lambda\bl)) \B] \f{d\sigma(\bl) }{4\pi};\\
 & S_{ET}(\bm{u}, \bm{b},\lambda)\\
 =& \f{1}{\lambda}\int_{\partial B } \bn \cdot \B[\delta \bm{v}(\lambda\bl)\B( [\delta \bm{v}_T(\lambda\bl)]^2+ [\delta \bm{b}_T(\lambda \bl)]^2\B)-2 \delta\bm{b}(\lambda\bl) \B(\delta\bm{v}_T(\lambda\bl)\cdot \delta\bm{b}_T(\lambda\bl)\B)\B] \f{d\sigma(\bl) }{4\pi}\\~~~~~~~~~\,&  -\f45 \f{1}{\lambda}\int_{\partial B } \bn\cdot \B[\delta \bm{b}(\lambda\bl) \B(\delta\bm{b}(\lambda\bl)\cdot\delta \bm{v}(\lambda\bl)\B)-\delta \bm{v}(\lambda\bl) | \delta\bm{b}(\lambda\bl)]^{2}\B] \f{d\sigma(\bl) }{4\pi}\\&+ \f{1}{\lambda}\int_{\partial B } \bl \cdot \B[ \delta \bm{b}(\lambda\bl)(\delta \bm{J}_T(\lambda\bl)\cdot \delta \bm{b}_T)-\f12\delta \bm{J}(\lambda\bl)[\delta \bm{b}_T]^2(\lambda\bl)\B] \f{d\sigma(\bl) }{4\pi}\\&+\f25 \f{1}{\lambda}\int_{\partial B } \bl \cdot \B[\delta\bm{J} (\lambda\bl) [\delta\bm{b} (\lambda\bl)]^{2}-\delta\bm{b}  (\lambda\bl) (\delta\bm{J}(\lambda\bl) \cdot \delta\bm{b}(\lambda\bl) )\B] \f{d\sigma(\bl) }{4\pi}.
\ea$$
  \end{coro}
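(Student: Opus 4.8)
\section*{Proof proposal for the Corollary}

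The plan is to exploit the additive structure of the Hall-MHD system \eqref{hallMHD}: its nonlinearity is exactly that of the classical MHD equations augmented by the Hall term $\text{d}_{\text{I}}\nabla\times[(\nabla\times\bm{b})\times\bm{b}]$ that drives the EMHD equation \eqref{EMHD}. Since the total energy density $\tfrac12(|\bm{u}|^2+|\bm{b}|^2)$ splits into kinetic and magnetic parts, and the Hall term sits only in the magnetic equation, the local energy balance decouples into an MHD contribution and an EMHD contribution. I would therefore assemble the Corollary from two blocks already in hand: the four-fifths law for classical MHD of \cite{[YWC]} and Theorem \ref{the1.1}. No fresh analytic estimate is introduced; everything is inherited from these two sources.

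First I would regularize. Convolving both equations of \eqref{hallMHD} with $\varphi^\varep$ and pairing against the mollified longitudinal (resp.\ transverse) increments, precisely as in the derivation of \eqref{1.12}--\eqref{1.13}, yields a local K\'arm\'an-Howarth identity whose flux is a sum of an MHD flux and a Hall flux. The terms built from $\bm{u}$, $\bm{b}$ and $P$ in the first lines of each displayed identity are produced verbatim by the classical-MHD computation of \cite{[YWC]}, while the terms carrying the factor $\text{d}_{\text{I}}$ and the current $\bm{J}$ in the remaining lines are produced verbatim by the Hall-term computation of Theorem \ref{the1.1}. Hence the right-hand sides simply add. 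The only point needing care is that no genuine cross term between the convective MHD nonlinearity and the Hall nonlinearity can arise: the balance is obtained by a single pairing of each equation against the appropriate regularized increment, so each nonlinear term of \eqref{hallMHD} contributes its flux additively, and the Hall term---present only in the magnetic equation and tested against $\bm{b}$---generates exactly the $\bm{J}$-fluxes of Theorem \ref{the1.1} and nothing more.

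Next I would pass to the limit $\varep\to0$. Under \eqref{1.19} the MHD fluxes converge in the sense of distributions by the Onsager-type argument of \cite{[YWC]}, which uses precisely $\bm{u},\bm{b}\in L^3_tL^3_x$, while the Hall fluxes converge by the mollification estimates of Theorem \ref{the1.1}, which use precisely the H\"older-conjugate pairing $\tfrac2p+\tfrac1m=1$, $\tfrac2q+\tfrac1n=1$ with $\bm{b}\in L^p_tL^q_x$ and $\bm{J}\in L^m_tL^n_x$. Adding the two limits produces a single distribution $D_E(\bm{u},\bm{b},\bm{J})$ and the local total-energy equation \eqref{1.18}. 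Finally, under statistical homogeneity and isotropy I would average the K\'arm\'an-Howarth-Monin relation over the sphere $\partial B$ and let $\lambda\to0$: the spherical-average identities of \cite{[YWC]} deliver the $\bm{u}$- and $\bm{b}$-pieces of $S_{EX}(\bm{u},\bm{b})$, and the vector triple-product identity underlying Theorem \ref{the1.1} (cf.\ the Remark) together with its spherical averages delivers the $\bm{J}$-pieces. Collecting constants gives the $4/5$ law, and the same longitudinal-to-transverse reduction used in Theorem \ref{the1.1} converts it into the $8/15$ law \eqref{mhde45}.

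The main obstacle I anticipate is bookkeeping rather than conceptual. One must track a large collection of flux terms and check that each term in the stated identities is reproduced, with the correct sign and constant, by exactly one of the two source computations, with the projector $\bn\otimes\bn$ distributed consistently between the longitudinal and transverse versions. Once the additive decoupling of the MHD and Hall fluxes is in place, every analytic ingredient---commutator estimates for the mollification, distributional convergence of the flux, and the spherical averaging---is imported unchanged from \cite{[YWC]} and Theorem \ref{the1.1}, so the verification reduces to careful term-matching.
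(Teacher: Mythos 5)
Your proposal matches the paper's intended argument exactly: the paper states that the Corollary follows "as a byproduct" of Theorem \ref{the1.1} combined with the four-fifths law of total energy for classical MHD from \cite{[YWC]}, leaving the verification to the reader, and your additive decomposition of the flux into the MHD part and the Hall part (with the correct observation that no cross terms arise because the local balance is linear in the nonlinear terms of each equation) is precisely that verification. The limit passage under \eqref{1.19} and the spherical-averaging step are likewise imported from the two sources just as you describe.
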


  \begin{remark} The four-fifths law in \eqref{mhde45} can be written as
  $$\ba
 &\langle \delta \bm{u}_{L}  [\delta \bm{u}_{L}|^{2}  \rangle+  \langle \delta \bm{u}_{L} [\delta \bm{h}_{L} |^{ 2}  \rangle  -2\langle \delta \bm{h}_{L} (\delta \bm{h}_{L} \cdot\delta \bm{u}_{L} )
\rangle-\f45\langle \delta \bm{h}_{L}(\delta \bm{h} \cdot\delta \bm{v} )  \rangle+\f45\langle \delta \bm{v}_{L}[\delta \bm{h} | ^{2}  \rangle\\&+\langle\delta\bm{b}_{L} (\delta\bm{J}_{L}\cdot \delta\bm{b}_{L})\rangle-\f12\langle\delta\bm{J}_{L} [\delta\bm{b}_{L}|^{2}\rangle-\f25\langle\delta\bm{J}_{L} [\delta\bm{b} |^{2}\rangle+\f25\langle\delta\bm{b}_{L}  (\delta\bm{J} \cdot \delta\bm{b} )\rangle=-\f45\epsilon_{E} r.
\ea$$
  \end{remark}
Next, we turn our attention to magnetic helicity $\int_{\mathbb{T}^3}  \bm{ A} \cdot {\rm curl\,}  \bm{ A}\  dx  $ as the second conserved quantity in the EMHD and Hall-MHD equations, where the magnetic vector potential
   $\bm{ A}={\rm curl}^{-1}\bm{b}$ is governed by
    \be\label{hmpotentialeq}
\bm{A}_{t}-\bm{u}\times \bm{b}+\text{d}_{\text{I}} (\nabla\times \bm{b})\times \bm{b}   +\nabla \pi=0, \s\bm{A}=0.
\ee
As mentioned above, the 4/5 law of  magnetic helicity in the EMHD equations had been discovered by  Chkhetiani in \cite{[Chkhetiani2]}. Hence, we mainly focus on four-fifths law  of   the magnetic helicity in the  Hall-MHD system.
\begin{theorem}\label{the1.3}
Assume  that the pair $(\bm{b},\bm{u}) $ and $\bm{ A}$ be the solution of the HMHD equations and   \eqref{hallMHD}  and \eqref{hmpotentialeq}, respectively.
There holds the following K\'arm\'an-Howarth-Monin type equation
$$\ba
&\partial_t(\bm{A}_{L}^{\varepsilon}\cdot \bm{b}+\bm{A}\cdot \bm{b}_{L}^{\varepsilon})-\s[(\bm{u}\times \bm{b}_{L})^{\varepsilon}\times \bm{A}]-2(\bm{u}\times \bm{b}_{L})^{\varepsilon}\cdot \bm{b}-\s[(\bm{u}\times \bm{b})\times \bm{A}_{L}^{\varepsilon}]\\&-2(\bm{u}\times \bm{b})\cdot \bm{b}^{\varepsilon}_{L}+ \text{d}_{\text{I}}\s([\s(\bm{b}\otimes \bm{b}_{L})]^{\varepsilon}\times \bm{A})
+\text{d}_{\text{I}}\s([\s(\bm{b}\otimes \bm{b})]\times \bm{A}_{L}^{\varepsilon})\\
& +\s[ \pi_L^{\varepsilon}\bm{b}+ \pi \bm{b}_{L}^{\varepsilon}]+2\text{d}_{\text{I}}\s\B[\bm{b}(\bm{b}\cdot\bm{b}_L^\varep)  \B]+\text{d}_{\text{I}}\s \B[\big(\bm{b}(\bm{b}_T\cdot\bm{b}_T)\big)^\varep-\bm{b}(\bm{b}_T\cdot\bm{b}_T)^\varep\B]
\\
=&-\f43 D_{ML}^{\varepsilon}(\bm{b},\bm{b}),
  \ea$$
and
$$\ba
&\partial_t(\bm{A}_{T}^{\varepsilon}\cdot \bm{b}+\bm{A}\cdot \bm{b}_{T}^{\varepsilon})-\s[(\bm{u}\times \bm{b}_{T})^{\varepsilon}\times \bm{A}]-2(\bm{u}\times \bm{b}_{T})^{\varepsilon}\cdot \bm{b}-\s[(\bm{u}\times \bm{b})\times \bm{A}_{T}^{\varepsilon}]\\&-2(\bm{u}\times \bm{b})\cdot \bm{b}^{\varepsilon}_{T}+ \text{d}_{\text{I}}\s([\s(\bm{b}\otimes \bm{b}_{T})]^{\varepsilon}\times \bm{A})
+\text{d}_{\text{I}}\s([\s(\bm{b}\otimes \bm{b})]\times \bm{A}_{T}^{\varepsilon})\\
& +\s[ \pi_T^{\varepsilon}\bm{b}+ \pi \bm{b}_{T}^{\varepsilon}]+2\text{d}_{\text{I}}\s\B[\bm{b}(\bm{b}\cdot\bm{b}_T^\varep)  \B]+ \s \B[\big(\bm{b}(\bm{b}_T\cdot\bm{b}_T)\big)^\varep- \bm{b}(\bm{b}_T\cdot\bm{b}_T)^\varep\B]\\
      = &-\f83D_{M T}^{\varepsilon}(\bm{b},\bm{b}),
\ea$$
where the  $D_{MX}^{\varepsilon}(\bm{b},\bm{b})$ for $X=L,T$ is defined in $$\ba
&D_{M L}^{\varepsilon}(\bm{b},\bm{b})=\f34\text{d}_{\text{I}}\int_{\mathbb{T}^{3}} \nabla \varphi^\varep(\ell)\cdot   \delta \bm{b} [\delta \bm{b}_L]^2+\f{2}{|\bl|}\varphi^\varep\bn\cdot   \delta \bm{b} [\delta \bm{b}_T(\bl)]^2 d^3\bl,
\\
&D_{M T}^{\varepsilon}(\bm{b},\bm{b}) =\f38\text{d}_{\text{I}}\int_{\mathbb{T}^{3}}\B[\nabla \varphi^\varep(\ell)-\f{2}{|\bl|}\varphi^\varep\bn\B]\cdot  \delta \bm{b} [\delta \bm{b}_T]^2  d^3\bl.
\ea$$
 Let
 $\bm{b}$ be a weak solution of the inviscid HMDH equations \eqref{hallMHD}
and magnetic vector potential $\bm{A}$  satisfy \eqref{hmpotentialeq}.  Assume that
 \be\label{the1.4c}
\bm{b} \in L^{\infty}(0,T;L^{\f32}(\mathbb{T}^{3}))\cap  L^{3}(0,T;L^{3}(\mathbb{T}^{3})) ~\text{and}\ \bm{u}\in L^{3}(0,T;L^{3}(\mathbb{T}^{3})). \ee
  Then the function
$D_{M L}^{\varepsilon}(\bm{b},\bm{b})$  and    $D_{M T}^{\varepsilon}(\bm{b},\bm{b})$
 converge  to a distribution $D_{MH}(\bm{b})$ in the sense of distributions as $\varepsilon\rightarrow0$, and $D_{MH  }(\bm{b})$ satisfies       the local equation of  magnetic helicity
\be \ba\label{01.20}
& \f12\partial_{t}(\bm{b}\cdot\bm{ A} )+\f12\s[(\bm{b}\times\bm{u}) \times \bm{ A}]
 + \f{\text{d}_{\text{I}}}{2}\s([\s(\bm{b} \otimes \bm{b})]\times \bm{ A})    + \f12\s[ \pi \bm{b}]+ \f12 {\text{d}_{\text{I}}}\s(\bm{b}|\bm{b}|^{2})\\
 =&-D_{M }(\bm{b}, \bm{b})
\ea\ee
in the sense of distributions.
Moreover, there hold the following scaling exact relation
\be\label{HMHDMHYaglom4/3law}
D_{M}(\bm{b},\bm{b})=-\f45 {S}_{ML}(\bm{b},\bm{b}),
D_{M}(\bm{b},\bm{b})=-\f{8}{15} {S}_{MT}(\bm{b},\bm{b}).
\ee
where $$S_{MX}(\bm{b},\bm{b})= \lim\limits_{\lambda\to 0}S_{MX}(\bm{b},\bm{b},\lambda),~\text{with}~X=L,T,   $$
and $$\ba
& {S}_{ML}(\bm{b}, \bm{b},\lambda)=\f{1}{\lambda}\int_{\partial B } \bn \cdot \B[   \delta \bm{b} (\lambda\bl)[\delta \bm{b}_L(\lambda\bl)]^2 \B] \f{d\sigma(\bl) }{4\pi},\\
& {S}_{MT}(\bm{b}, \bm{b},\lambda)=\f{1}{\lambda}\int_{\partial B } \bn \cdot \B[  \delta \bm{b} (\lambda\bl)[\delta \bm{b}_T(\lambda\bl)]^2 \B] \f{d\sigma(\bl) }{4\pi}.
\ea$$
 \end{theorem}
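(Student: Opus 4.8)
The plan is to adapt the Duchon--Robert/Eyink mollification scheme \cite{[DR],[Eyink1]}, used in \cite{[YWC]} for energy and cross-helicity, to the magnetic-helicity density $\bm{A}\cdot\bm{b}$. Writing $f^\varep=\varphi_\varep\ast f$ and using the longitudinal/transverse splitting \eqref{eyinksp}, I would first record the two evolution laws in play: $\bm{b}=\nabla\times\bm{A}$ and the potential equation \eqref{hmpotentialeq}, whose nonlinearities are the advection term $\bm{u}\times\bm{b}$ and the Hall term $\text{d}_\text{I}\,\bm{J}\times\bm{b}$ with $\bm{J}=\nabla\times\bm{b}$. The object to differentiate is the symmetrically filtered density $\bm{A}_L^\varep\cdot\bm{b}+\bm{A}\cdot\bm{b}_L^\varep$ (and its transverse analogue); substituting the two equations into $\partial_t$ of this quantity and reorganizing each resulting nonlinear product into a pure-divergence (flux) part plus a ``defect'' part is what will generate, respectively, the flux terms on the left of the stated Kármán--Howarth--Monin equations and the dissipation functionals $D_{ML}^\varep$, $D_{MT}^\varep$ on the right.

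The engine of the term-by-term computation is the Duchon--Robert commutator identity, which rewrites a filtered bilinear product as its unfiltered counterpart plus an integral of increments tested against $\nabla\varphi_\varep$. Applied to the advection and pressure contributions this reproduces the $\s[(\bm{u}\times\bm{b}_X)^\varep\times\bm{A}]$, $\s[\pi_X^\varep\bm{b}+\pi\bm{b}_X^\varep]$ and related flux terms directly. The delicate contribution is the Hall term: because $\bm{J}\times\bm{b}$ carries one derivative more than the Euler/MHD advection, a naive commutator expansion leaves a term with an uncontrolled derivative. I would resolve this, as in \cite{[YWC]}, by using the vector-triple-product form of $\bm{J}\times\bm{b}$ together with the algebraic identity \eqref{2.9}, integrating by parts to transfer the curl onto the mollifier so that the entire Hall contribution is expressed through increments of $\bm{b}$ alone. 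This is exactly why the final $D_{MX}^\varep$ depend only on $\delta\bm{b}$ and on the cubic structure $\delta\bm{b}\,[\delta\bm{b}_X]^2$, with no surviving $\delta\bm{J}$.

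The next step is the limit $\varep\to0$. Under the hypotheses \eqref{the1.4c}, $\bm{b}\in L^\infty_t L^{3/2}_x\cap L^3_t L^3_x$ and $\bm{u}\in L^3_t L^3_x$, note that $\bm{A}=\text{curl}^{-1}\bm{b}$ gains a derivative and hence lies in $L^\infty_t L^3_x$ by Sobolev embedding, so every flux term is a product of $L^3_{t,x}$ factors and converges in $\mathcal D'$ to the corresponding unfiltered flux; this yields the local magnetic-helicity balance \eqref{01.20}. The exponents are tuned precisely so that the cubic increment integrands defining $D_{MX}^\varep$ are dominated in $L^1_{t,x}$, whence commutator estimates of Constantin--E--Titi type give convergence of both $D_{ML}^\varep$ and $D_{MT}^\varep$ to one and the same distribution $D_{MH}(\bm{b})$. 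Finally, the scaling relations \eqref{HMHDMHYaglom4/3law} follow by taking $\varphi$ radial and averaging over the sphere: writing $\nabla\varphi_\varep(\bl)=\varphi_\varep'(|\bl|)\bn$ and carrying out the angular integral $\int_{\partial B}(\cdots)\,d\sigma$, the longitudinal projection reproduces $S_{ML}$ with the geometric factor $4/5$ and the transverse projection reproduces $S_{MT}$ with factor $8/15$, the two constants standing in the ratio $S_{MT}=\tfrac32 S_{ML}$ dictated by the sphere integrals.

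The main obstacle is the Hall-term bookkeeping of the second paragraph. The point at which the most care is needed is verifying that the integration by parts and the use of \eqref{2.9} are \emph{exact}, i.e. that moving the extra curl onto $\varphi_\varep$ leaves no uncancelled remainder or boundary contribution and collapses the Hall flux entirely into increments of $\bm{b}$; once this identity is established the convergence argument and the spherical averaging are routine, and the cross-helicity/energy template of \cite{[YWC]} applies verbatim.
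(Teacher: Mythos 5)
Your proposal follows essentially the same route as the paper: symmetric mollification of the density $\bm{A}_X^\varep\cdot\bm{b}+\bm{A}\cdot\bm{b}_X^\varep$, reduction of the Hall contribution to a pure $\bm{b}$-advection term (the paper does this by writing $(\nabla\times \bm{b})\times\bm{b}=\s(\bm{b}\otimes\bm{b})-\nabla\f12|\bm{b}|^{2}$ and then moving the remaining curl onto $\bm{A}$ so that it produces $\bm{b}$, rather than onto the mollifier, which is why no $\delta\bm{J}$ survives), the key algebraic identity \eqref{2.9} through Lemma \ref{lemma2.2}, and spherical averaging of the two dissipation functionals to solve a coupled system for the $4/5$ and $8/15$ constants. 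The only slip is your parenthetical ratio, which should read $S_{ML}=\f32 S_{MT}$ (equivalently $S_{MT}=\f23 S_{ML}$) rather than $S_{MT}=\f32 S_{ML}$; this does not affect the structure of the argument.
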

 \begin{remark} One may rewrite  the exact relation   \eqref{HMHDMHYaglom4/3law} in the Hall-MHD equations as
 $$\langle[\delta \bm{b}_{L}(\bm{r})]^{3} \rangle=-\f45\epsilon \bm{r},$$
which is   consistent with the  four-fifths   law of  magnetic helicity obtained in the EMHD system by Chkhetiani in \cite{[Chkhetiani2]}.
 \end{remark}
Total energy, cross-helicity and magnetic helicity are three quadratic invariant  quantities in classical MHD approximation.  However, the Hall term in the Hall-MHD equations  \eqref{hallMHD} destroys the conservation of cross-helicity $\int_{\mathbb{T}^{3}} \bm{u}\cdot \bm{b} dx$. Alternatively, the following generalized helicity
 \be\label{ghh}
 \int_{\mathbb{T}^{3}}(\bm{A}+\bm{u})(\bm{b}+\bm{\omega})dx
 \ee
 is as the third quadratic conserved  quantity in the Hall-MHD system \eqref{hallMHD}. This invariant  quantity was initiated by
   Turner
 in \cite{[Turner]}.
The generalized hybrid helicity \eqref{ghh}  helps us   to establish
 Alfv\'en's theorem in the Hall-MHD system and is close to  the topology of the Hall MHD fluid (see \cite{[Galtier],[Galtier06],[Turner]}).
For the generalized helicity \eqref{ghh},
a slight variant of the proof of above theorem means that
\be\label{1.15}\ba
&\langle \delta \bv_{L} (\delta \bv_{L} \cdot\delta \bomega_{L} )  \rangle- \f12\langle \delta \bomega_{L} (\delta \bv_{L} )^{ 2}  \rangle  +2\langle \delta \bv_{L} (\delta \bm{h}_{L} \cdot\delta \bv_{L} )
\rangle -\langle \delta \bm{h}_{L}  (\delta \bv_{L})^{2} \rangle \\&-\f25\langle  \delta\bomega_{L}(\delta\bv)^{2}\rangle+ \f25\langle  \delta\bv_{L}(\delta\bv\cdot\delta\bomega)\rangle -\f45\langle \delta \bm{h}_{L} ( \delta \bv )^{2}
\rangle+\f45\langle \delta \bm{v}_{L} ( \delta \bv\cdot \delta \bm{h}  )
\rangle=-\f45\epsilon_{H} \bm{r}.
\ea\ee
The proof is left to the reader.

The paper is organized as follows.  In section 2, we will state some notations and key    identities which we will be   frequently used in the whole paper. Section 3  is devoted to the scaling law of energy  for the EMHD equations. In section 4, we consider the four-fifths law of magnetic helicity in the Hall-MHD system.  Concluding remarks are given in
section 5.

\section{Notation and preliminaries}

Firstly, we will give some notatiois we will used in the present paper. In the sequel, for any $p\in [1,\,\infty]$, the notation $L^{p}(0,\,T;X)$ stands for the set of measurable functions $f$ on the interval $(0,\,T)$ with values in $X$ and $\|f\|_{X}$ belonging to $L^{p}(0,\,T)$.
We also let $\varphi(\ell)$ be any $C_0^\infty$ function, nonnegative with unit integral, radially symmetric, and $\varphi^\varep(\ell)=\f{1}{\varep^3}\varphi(\f{\ell}{\varep})$.
Then,  for any function $f\in L^1_{\textrm{loc}}(\mathbb{R}^3)$, its mollified version is defined by
$$
f^\varepsilon(\bm{ x})=\int_{\mathbb{R}^3}\varphi_{\varepsilon}(\bl)f(\bm{x}+\bm{\ell})d\bm{\ell},\ \ \bm{x}\in \mathbb{R}^3.
$$
 Just as \cite{[Eyink1]}, for any vector $\bm{E}(x,t)=(E_{3}(x,t),E_{3}(x,t),E_{3}(x,t))$,
we denote
\be\label{defiofeyink}\ba
\bm{E}^\varep_X(\bx,t)=\int_{\mathbb{T}^{3}} \varphi^\varep(\bl)\bm{E}_X(\bx,t;\bl)d^3\bl,~~~X=L,T,
\ea\ee
where
$$\bm{E}_L(\bx,t,\bl)=(\bn\otimes \bn)\cdot \bm{E}(\bx+\bl,t),~~\bm{E}_T(\bx,t,\bl)=({\bf 1}-\bn\otimes \bn)\cdot \bm{E}(\bx+\bl,t).$$
Furthermore, by a straightforward computation, it is easy   to verify that
\be\label{1/3}
\int_{\mathbb{T}^{3}}
\varphi^\varep(\bl) (\bn\otimes \bn) d^3\bl=\f{\delta_{ij}}{3}.
\ee
Secondly, for the convenience of the reader, we recall some vector identities as follows,
\be\ba\label{VI}
&\nabla(\bm{E}\cdot \bm{F})=\bm{E}\cdot\nabla \bm{F}+\bm{F}\cdot\nabla \bm{E}+\bm{E}\times\text{curl}\bm{F}+\bm{F}\times\text{curl}\bm{E},\\
& \nabla\times(\bm{E}\times \bm{F})=\bm{E} \s \bm{F}-\bm{F} \s  \bm{E}+\bm{F}\cdot\nabla \bm{E}-\bm{E}\cdot\nabla \bm{F},\\
&\bm{E}\cdot(\nabla\times \bm{F})=\s(\bm{F}\times \bm{E})+\bm{F}\cdot(\nabla\times \bm{E}),\\
&\s(\nabla \times\bm{E})=0,~~\nabla \times(\nabla \bm{E})=0,
\ea\ee
which will be frequently used in this paper.

Then combining \eqref{VI} and the divgence-free condition $ \Div \bm{J}=0$ with $\bm{J}=\text{curl} \bm{b}$, we find
 \be\label{identity}\ba
& \bm{b}\cdot\nabla  \bm{b} = \f{1}{2}\nabla | \bm{b}|^{2}+\bm{J}\times \bm{b},\\
&\nabla\times(\bm{J}\times \bm{b})= \bm{b}\cdot\nabla \bm{J}-\bm{J}\cdot\nabla \bm{b},\ea\ee
from which follows that
\begin{align}\label{non1}
 &\nabla\times[(\nabla\times \bm{b})\times \bm{b}]= \nabla\times[\bm{J}\times \bm{b}]=
 \s( \bm{b}\otimes\bm{J})- \s(\bm{J}\otimes  \bm{b}),\\
 \label{non2}&\nabla\times[(\nabla\times \bm{b})\times \bm{b}]= \nabla\times[\bm{J}\times \bm{b}]=\nabla\times[\s(\bm{b} \otimes \bm{b})-\nabla\f12|\bm{b}|^{2}]=\nabla\times[\s(\bm{b} \otimes \bm{b})].
\end{align}
According to the definition \eqref{defiofeyink}, we see that
\begin{align}\label{non3}
 &\nabla\times[(\nabla\times \bm{b})\times \bm{b}_{X}]^{\varepsilon} =
 \s( \bm{b}\otimes\bm{J}_{X})^{\varepsilon}- \s(\bm{J}\otimes  \bm{b}_{X})^{\varepsilon},\\
 \label{non4}&\nabla\times[(\nabla\times \bm{b})\times \bm{b}_{X}]^{\varepsilon} =\nabla\times[\s(\bm{b} \otimes \bm{b}_{X})]^{\varepsilon}.
\end{align}
 Next, we present  a critical identity observed in \cite{[YWC]}, which allows one to deal with the
interaction of different physical quantities.
\begin{lemma}\label{lemma2.1}For any vectors ${\bf E},$ ${\bf F} $ and ${\bf G}$ in three dimension, there holds
\be\label{2.9}\ba
&\B[\f{\partial}{\partial_{\ell_k}}(\bn_i\bn_j)-(\f{\partial \bn_i}{\partial\ell_j}+\f{\partial \bn_j}{\partial\ell_i})\bn_k\B]E_kF_iG_j
= \f{1}{|\bl|}\bn\cdot\B[{\bf G}({\bf E}\cdot {\bf F})+{\bf F}({\bf E}\cdot {\bf G})-2{\bf E}({\bf F}\cdot {\bf G})\B].
\ea\ee
\end{lemma}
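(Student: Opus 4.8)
The plan is to read \eqref{2.9} as a purely algebraic identity in the unit vector $\bn=\bl/|\bl|$: I would reduce the tensor in square brackets on the left to an elementary combination of Kronecker deltas and components of $\bn$, and then contract against $E_kF_iG_j$ to recover the right-hand side. The single computation that everything rests on is the derivative of the unit vector. Since $\partial|\bl|/\partial\ell_k=\ell_k/|\bl|=\bn_k$, the quotient rule gives
$$\f{\partial \bn_i}{\partial \ell_k}=\f{1}{|\bl|}\B(\delta_{ik}-\bn_i\bn_k\B),$$
and I would establish this first.

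Next I would assemble the two pieces of the bracket. By the product rule,
$$\f{\partial}{\partial \ell_k}(\bn_i\bn_j)=\f{1}{|\bl|}\B(\delta_{ik}\bn_j+\delta_{jk}\bn_i-2\bn_i\bn_j\bn_k\B),$$
while symmetrizing the basic derivative yields
$$\f{\partial \bn_i}{\partial \ell_j}+\f{\partial \bn_j}{\partial \ell_i}=\f{2}{|\bl|}\B(\delta_{ij}-\bn_i\bn_j\B),$$
so multiplying this by $\bn_k$ produces $\f{2}{|\bl|}\B(\delta_{ij}\bn_k-\bn_i\bn_j\bn_k\B)$. Subtracting, the degree-three terms $\bn_i\bn_j\bn_k$ cancel exactly and the bracket collapses to
$$\f{1}{|\bl|}\B(\delta_{ik}\bn_j+\delta_{jk}\bn_i-2\delta_{ij}\bn_k\B).$$

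Finally I would contract this tensor with $E_kF_iG_j$ term by term, using $\delta_{ik}\bn_jE_kF_iG_j=({\bf E}\cdot{\bf F})(\bn\cdot{\bf G})$, then $\delta_{jk}\bn_iE_kF_iG_j=({\bf E}\cdot{\bf G})(\bn\cdot{\bf F})$ and $\delta_{ij}\bn_kE_kF_iG_j=({\bf F}\cdot{\bf G})(\bn\cdot{\bf E})$; combining these with the coefficients $1,1,-2$ reproduces exactly $\f{1}{|\bl|}\bn\cdot\B[{\bf G}({\bf E}\cdot{\bf F})+{\bf F}({\bf E}\cdot{\bf G})-2{\bf E}({\bf F}\cdot{\bf G})\B]$, which is the claimed right-hand side. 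There is no analytic difficulty in this lemma; the proof is entirely an index manipulation. The only point requiring care, and the crux of why the identity is so clean, is the cancellation of the cubic terms $\bn_i\bn_j\bn_k$ in the subtraction step, which is precisely what leaves a bracket linear in $\bn$ and lets the final contraction collapse into the symmetric triple-product form on the right.
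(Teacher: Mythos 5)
Your proposal is correct and follows essentially the same route as the paper: both compute $\partial_{\ell_k}\bn_i=\f{1}{|\bl|}(\delta_{ik}-\bn_i\bn_k)$, observe the cancellation of the cubic terms $\bn_i\bn_j\bn_k$, and reduce the bracket to $\f{1}{|\bl|}(\delta_{ik}\bn_j+\delta_{jk}\bn_i-2\delta_{ij}\bn_k)$. The only difference is in the last step, where you evaluate the contraction against $E_kF_iG_j$ directly via the Kronecker deltas, whereas the paper verifies the same identity by an enumeration of five index cases; your version is shorter but the content is identical.
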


\begin{proof}
We check
\be\label{c1}\ba
\partial_{\ell_k}\bn_i&=\f{\partial}{\partial\ell_k}\B(\f{\ell_i}{|\bl|}\B)=\f{\delta^{ik}|\bl|-\f{\ell_i\ell_k}{|\bl|}}{|\bl]^2}=\f{1}{|\bl|}(\delta^{ik}-\bn_i\bn_k),
\ea\ee
from which follows that
\be\label{2.10}\ba
&\B[\f{\partial}{\partial_{\ell_k}}(\bn_i\bn_j)-(\f{\partial \bn_i}{\partial\ell_j}+\f{\partial \bn_j}{\partial\ell_i})\bn_k\B]E_kF_iG_j\\
=&\B[\big(\f{\partial \bn_i}{\partial \ell_k}\bn_j+\f{\partial \bn_j}{\partial \ell_k}\bn_i\big)-\f{2}{|\bl|}(\delta^{ij}-\bn_i\bn_j)\bn_k\B]E_kF_iG_j\\
=&\f{1}{|\bl|}\B[(\delta^{ik}-\bn_i\bn_k)\bn_j+(\delta^{jk}-\bn_j\bn_k)\bn_i-2(\delta^{ij}-\bn_i\bn_j)\bn_k\B]E_kF_iG_j\\
=&\f{1}{|\bl|}\B[(\delta^{ik}\bn_j+\delta^{jk}\bn_i-2\delta^{ij}\bn_k  ]E_{k}F_{i}G_{j}.
\ea\ee
For the convience, we denote the left hand side of above equation by $I$. Then, we will discuss term $I$ into five cases.\\
Case1:   $i=k=j$. It is not difficult to verify that
\be\label{2.11}
I=\f{1}{|\bl|}(\bn_j+\bn_i-2\bn_k)E_kF_iG_j={\bf 0}.
\ee
Case 2:  $i=k$ but $j\neq k$. Some tedious manipulation leads to
\be\label{2.12}\ba
I&=\f{1}{|\bl|}\bn_jE_kF_iG_j\\
&=\f{1}{|\bl|}\B[\bn_1G_1(E_2F_2+E_3F_3)+\bn_2G_2(E_1F_1+E_3F_3)+\bn_3G_3(E_1F_1+E_2F_2)\B]\\
&=\f{1}{|\bl|}\B[\bn_1G_1 ({\bf E}\cdot {\bf F})-\bn_1E_1F_1G_1+\bn_2G_2 ({\bf E}\cdot {\bf F})-\bn_2E_2 F_2G_2+\bn_3G_3({\bf E}\cdot {\bf F})-\bn_3E_3F_3G_3\B]\\
&=\f{1}{|\bl|}\B[\bn\cdot {\bf G} ({\bf E}\cdot {\bf F})-\bn_1E_1F_1G_1-\bn_2E_2 F_2G_2-\bn_3E_3F_3G_3\B].
\ea\ee
Case 3:   $i\neq k$ and $j=i$. It is straightforward to show that
\be\label{2.13}\ba
I=&\f{1}{|\bl|}(-2\bn_k)E_kF_iG_j\\
=&-\f{2}{|\bl|}\B[\bn_1E_1 (F_2G_2+F_3G_3)+\bn_2E_2 (F_1G_1+F_3G_3)+\bn_3E_3 (F_1G_1+B_2G_2)\B]\\
=&-\f{2}{|\bl|}\B[\bn\cdot {\bf E}({\bf F}\cdot {\bf G})-\bn_1E_1F_1G_1-\bn_2E_2 F_2G_2-\bn_3E_3F_3G_3\B].
\ea\ee
Case 4:  $i\neq k$ and $j=k$. A simple calculation yields that
\be\label{2.14}\ba
I=&\f{1}{|\bl|}\bn_iE_kF_iG_j\\
=&\f{1}{|\bl|}\B[\bn_1F_1(E_2G_2+E_3G_3)+\bn_2F_2(E_1G_1+E_3G_3)+\bn_3F_3(E_1G_1+E_2G_2)\B]\\
=&\f{1}{|\bl|}\B[\bn\cdot {\bf F}({\bf E}\cdot {\bf G})-\bn_1E_1F_1G_1-\bn_2E_2 F_2G_2-\bn_3E_3F_3G_3\B].
\ea\ee
Case 5:   $i\neq k$, $j\neq k$ and $j\neq i$. Notice that
$I={\bf 0}.$

Putting the above identities together, we get the desired result \eqref{2.9}.
\end{proof}
With Lemma \ref{lemma2.1} in hand, we can further deduce the following identities, which  will be frequently used in the proof of four-fifths laws in the electron and Hall magnetohydrodynamic
fluids.
\begin{lemma}\label{lemma2.2} For any vector
$\bm{E}=(E_{1}(x),E_{2}(x),E_{3}(x)), $ $ \bm{F}=(F_{1}(x),F_{2}(x),F_{3}(x))$, there hold the following identities
\be\label{2.16}\ba
&\int_{\mathbb{T}^{3}} \nabla \varphi^\varep(\ell)\cdot \delta \bm{E}(\delta \bm{F}_L \cdot \delta \bm{E}_L)+\f{2}{|\bl|}\varphi^\varep\bn\cdot \delta \bm{E}(\delta \bm{F}_T\cdot \bm{E}_T)\\&- \f{1}{|\bl|}\varphi^\varep\bn\cdot\B[\delta \bm{E}(\delta \bm{E} \cdot \delta \bm{F})-\delta \bm{F} (\delta \bm{E}\cdot \delta \bm{E})\B] d^3 \bl +\s\B[\big(\bm{E} (\bm{F}_L\cdot \bm{E}_L)\big)^\varep-\bm{E}(\bm{F}_L\cdot \bm{E}_L)^\varep\B]\\&=E_j\partial_k\B[(E_k F_{L_j})^\varep-(E_k F_{L_j}^\varep)\B]+F_i\partial_k\B[(E_k E_{L_i})^\varep-(E_k E_{L_i}^\varep)\B];
\ea\ee
\be\label{2.17}\ba
&\f12\int_{\mathbb{T}^{3}} \nabla \varphi^\varep(\ell)\cdot \delta \bm{E} [\delta \bm{F}_L]^2+\f{2}{|\bl|}\varphi^\varep\bn\cdot \B[\delta \bm{E} [\delta \bm{F}_T]^2 +\delta \bm{F}(\delta \bm{E} \cdot \delta \bm{F})-\delta \bm{E} (\delta \bm{F}\cdot \delta \bm{F})\B]d^3\bl\\&+\f12
\s \B[\big(\bm{E}(\bm{F}_L\cdot\bm{F}_L)\big)^\varep-\bm{E}(\bm{F}_L\cdot\bm{F}_L)^\varep\B]
 =  F_j\partial_k\B[(E_k F_{L_j})^\varep-(E_k F_{L_j}^\varep)\B];
\ea\ee
\be\label{2.18}\ba
&\int_{\mathbb{T}^{3}} \nabla \varphi^\varep(\ell)\cdot \delta \bm{E}(\delta \bm{F}_T\cdot \delta \bm{E}_T)-\f{2}{|\bl|}\varphi^\varep\bn\cdot \delta\bm{E} (\delta\bm{F}_T\cdot \delta\bm{E}_T) \\&+\f{1}{|\bl|}\varphi^\varep\bn\cdot\B[\delta \bm{E}(\delta\bm{E}\cdot \delta\bm{F})-\delta \bm{F}(\delta \bm{E}\cdot \delta \bm{E})\B] d^3\bl+\s\B[\big(\bm{E} (\bm{F}_T\cdot\bm{E}_T)\big)^\varep-\bm{E} (\bm{F}_T\cdot \bm{E}_T)^\varep\B]\\
=&E_j\partial_k\B[(E_k F_{T_j})^\varep-(E_k F_{T_j}^\varep)\B]+F_i\partial_k\B[(E_k E_{T_i})^\varep-(E_k E_{T_i}^\varep)\B];
\ea\ee
\be\label{2.19}\ba
&\f12\int_{\mathbb{T}^{3}} \nabla \varphi^\varep(\ell)\cdot \delta \bm{E}[\delta \bm{F}_T]^2-\f{2}{|\bl|}\varphi^\varep\bn\cdot \delta \bm{E} [\delta \bm{F}_T]^2 \\ & -\f{2}{|\bl|}\varphi^\varep\bn\cdot\B[\delta \bm{F}(\delta \bm{E}\cdot \delta \bm{F})-\delta \bm{E}(\delta \bm{F}\cdot \delta \bm{F})\B]d^3\bl+\f12\s \B[\big(\bm{E}(\bm{F}_T\cdot\bm{F}_T)\big)^\varep-\bm{E}(\bm{F}_T\cdot\bm{F}_T)^\varep\B]\\
=&F_j\partial_k\B[(E_k F_{T_j})^\varep-(E_k F_{T_j}^\varep)\B].
\ea\ee
\end{lemma}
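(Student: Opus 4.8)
The plan is to prove all four identities by a single mechanism, so I would write out only \eqref{2.17} in detail and obtain \eqref{2.16}, \eqref{2.18} and \eqref{2.19} by the identical computation, the only differences being whether the longitudinal projector $\bn_i\bn_j$ or the transverse projector $\delta_{ij}-\bn_i\bn_j$ is used and which of $\bm{E},\bm{F}$ carries the quadratic factor. Each right-hand side is a \emph{mollification commutator} of the Duchon--Robert/Eyink type \cite{[DR],[Eyink1]}, and the engine for simplifying it is Lemma~\ref{lemma2.1}, which converts derivatives of the projector $\bn\otimes\bn$ into the weighted boundary expressions appearing on the left.

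First I would unfold the commutator. Using the definitions in \eqref{defiofeyink}, for every pair $k,j$,
\[
(E_kF_{L_j})^\varep-E_kF_{L_j}^\varep=\int_{\mathbb{T}^3}\varphi^\varep(\bl)\,\delta E_k(\bl)\,\bn_j\bn_m F_m(\bm{x}+\bl)\,d^3\bl,
\]
with $\delta E_k(\bl)=E_k(\bm{x}+\bl)-E_k(\bm{x})$. Next I would apply $\partial_{x_k}$, replace each $\bm{x}$-derivative of a field evaluated at $\bm{x}+\bl$ by the matching $\partial_{\ell_k}$, and integrate by parts in $\bl$. The derivative then lands in exactly two places: on $\varphi^\varep$, producing a factor $\nabla\varphi^\varep$, and on the projector $\bn_j\bn_m$, producing $\partial_{\ell_k}(\bn_j\bn_m)$. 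Writing $F_m(\bm{x}+\bl)=\delta F_m+F_m(\bm{x})$ and discarding the $\bm{x}$-constant pieces that integrate to zero against $\nabla\varphi^\varep$, the first family of terms reproduces $\tfrac12\int\nabla\varphi^\varep\cdot\delta\bm{E}\,[\delta\bm{F}_L]^2$ on the left of \eqref{2.17}.

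The substance of the argument is the projector-derivative family $\int\varphi^\varep\,\partial_{\ell_k}(\bn_j\bn_m)(\cdots)\,d^3\bl$. After reorganizing it, together with the corresponding contribution from the divergence counterterm $\tfrac12\s[(\bm{E}(\bm{F}_L\cdot\bm{F}_L))^\varep-\bm{E}(\bm{F}_L\cdot\bm{F}_L)^\varep]$, into the symmetric index pattern appearing on the left of \eqref{2.9}, I would invoke Lemma~\ref{lemma2.1} to collapse it into $\tfrac{1}{|\bl|}\bn\cdot[\bm{G}(\bm{E}\cdot\bm{F})+\bm{F}(\bm{E}\cdot\bm{G})-2\bm{E}(\bm{F}\cdot\bm{G})]$; this is precisely the source of the $\tfrac{2}{|\bl|}\varphi^\varep\,\bn\cdot[\cdots]$ terms on the left. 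The divergence counterterm is exactly what is needed to absorb the residual $\bm{x}$-local remainder of the commutator, and in the intended applications the fields are divergence free, so no spurious $\s\bm{E}$ factors survive. Repeating this verbatim with the transverse projector gives \eqref{2.19}, while running the same scheme with $\bm{E}$ and $\bm{F}$ interchanged in the quadratic factor yields the mixed identities \eqref{2.16} and \eqref{2.18}.

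The main obstacle is one of bookkeeping rather than of ideas. Because the projector $\bn\otimes\bn$ is itself $\bl$-dependent, one cannot naively drop $\bm{x}$-constant factors when integrating against $\nabla\varphi^\varep$, so I must track carefully which increments survive; and the collapse step requires matching the terms generated by $\partial_{\ell_k}(\bn_j\bn_m)$ exactly to the index pattern demanded by Lemma~\ref{lemma2.1}. The normalization \eqref{1/3} furnishes a convenient consistency check on this matching. Once the index pattern is aligned, the reduction via Lemma~\ref{lemma2.1} is purely mechanical, and collecting the $\nabla\varphi^\varep$, $\tfrac{1}{|\bl|}\varphi^\varep$, and divergence pieces gives the stated identity.
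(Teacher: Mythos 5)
Your proposal is correct and follows essentially the same route as the paper's proof: both hinge on recombining $\nabla\varphi^\varep$ with the projector derivatives into $\partial_{\ell_k}(\varphi^\varep\bn_i\bn_j)$, invoking Lemma~\ref{lemma2.1} to generate the $\tfrac{1}{|\bl|}\varphi^\varep\,\bn\cdot[\cdots]$ corrections, and expanding the increments term by term with integration by parts in $\bl$ to identify the divergence and mollification-commutator pieces --- the paper simply runs this computation from the integral side toward the commutator side rather than in the reverse direction you chose. Your caveat that divergence-freeness of $\bm{E}$ is needed to kill spurious $\s\bm{E}$ factors is accurate and worth noting: the paper's own proof uses it tacitly (e.g.\ in writing $E_k\partial_k(\bm{F}_L\cdot\bm{E}_L)^\varep=\s[\bm{E}(\bm{F}_L\cdot\bm{E}_L)^\varep]$ in \eqref{2.28}) even though the lemma is stated for arbitrary vectors.
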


\begin{proof}
First, it is simple to check that
\be\label{2.20}\ba
\delta \bm{E}_{L}\cdot \delta \bm{F}_{L}=\bn_i(\bn_j \delta E_j)\cdot\bn_i(\bn_k \delta F_k)=\bn_j \bn_k\delta E_j \delta F_k=\bn_j \bn_i\delta E_j \delta F_i= \bn_i\bn_j\delta E_i \delta F_j,
\ea\ee
\be\label{2.21}\ba
\delta \bm{E}_{T}\cdot \delta \bm{F}_{T}
&=(\delta^{ij}-\bn_i\bn_j)\delta E_j\cdot (\delta^{ik}-\bn_i\bn_k)\delta F_k
&=(\delta^{jk}-\bn_j\bn_k)\delta E_j\delta F_k
&=(\delta^{i j }-\bn_i\bn_j)\delta  E_i\delta F_j,
\ea\ee
and
\be\label{2.22}\ba
\partial_{\ell_k}\bn_i&=
\f{\partial}{\partial\ell_k}\B(\f{\ell_i}{|\bl|}\B)
=\f{\delta^{ik}|\bl|-\f{\ell_i\ell_k}{|\bl|}}{|\bl]^2}
=\f{1}{|\bl|}(\delta^{ik}-\bn_i\bn_k).
\ea\ee

Thanks to \eqref{2.20} and \eqref{2.21}, it follows that
\be\ba
&\int_{\mathbb{T}^{3}} \nabla \varphi^\varep(\ell)\cdot \delta \bm{E}(\bl)(\delta \bm{F}_L \cdot \delta \bm{E}_L)+\f{2}{|\bl|}\varphi^\varep(\ell)\bn\cdot \delta \bm{E}(\bl)(\delta \bm{F}_T\cdot \bm{E}_T) d^3 \bl
\\=&\int_{\mathbb{T}^{3}} \f{\partial \varphi^\varep}{\partial_{\ell_k}}\delta E_k\bn_i\bn_j \delta F_i \delta E_j+ \varphi^\varep\bn_k\cdot \delta E_k(\f{\partial \bn_i}{\partial\ell_j}+\f{\partial \bn_j}{\partial\ell_i}  )\delta F_i \delta E_j d^3 \bl,
\ea\ee
where we have used \eqref{2.22} and
$$\f{2}{|\bl|} (\delta ^{ij}-\bn_i\bn_j)
= \f{\partial \bn_i}{\partial\ell_j}+\f{\partial \bn_j}{\partial\ell_i}.  $$
Then due to the Leibniz formula, we further deduce that
$$ \ba
&\int_{\mathbb{T}^{3}} \nabla \varphi^\varep(\ell)\cdot \delta \bm{E}(\delta \bm{F}_L \cdot \delta \bm{E}_L)+\f{2}{|\bl|}\varphi^\varep(\ell)\bn\cdot \delta \bm{E}(\delta \bm{F}_T\cdot \bm{E}_T) d^3 \bl\\
=&\int_{\mathbb{T}^{3}} \f{\partial }{\partial_{\ell_k}}\B(\varphi^\varep\bn_i\bn_j\B)\delta E_k\delta F_i \delta E_j-\varphi^\varep\B[\f{\partial}{\partial_{\ell_k}}(\bn_i\bn_j)-(\f{\partial \bn_i}{\partial\ell_j}+\f{\partial \bn_j}{\partial\ell_i})\bn_k\B]\delta E_k\delta F_i \delta E_j d^3\bl.
\ea$$
Hence, invoking Lemma \ref{lemma2.1}, we can obtain
\be \ba\label{02.22}
&\int_{\mathbb{T}^{3}} \nabla \varphi^\varep(\ell)\cdot \delta \bm{E}(\delta \bm{F}_L \cdot \delta \bm{E}_L)+\f{2}{|\bl|}\varphi^\varep\bn\cdot \delta \bm{E}(\delta \bm{F}_T\cdot \bm{E}_T) d^3 \bl\\
 =&\int_{\mathbb{T}^{3}} \f{\partial }{\partial_{\ell_k}}\B(\varphi^\varep\bn_i\bn_j\B)\delta E_k\delta F_i \delta E_j+\f{1}{|\bl|}\varphi^\varep\bn\cdot\B[\delta \bm{E}(\delta \bm{E} \cdot \delta \bm{F})-\delta \bm{F} (\delta \bm{E}\cdot \delta \bm{E})\B]d^3\bl,
\ea\ee
which implies
\be \label{2.23}\ba
&\int_{\mathbb{T}^{3}} \nabla \varphi^\varep(\ell)\cdot \delta \bm{E}(\delta \bm{F}_L \cdot \delta \bm{E}_L)+\f{2}{|\bl|}\varphi^\varep\bn\cdot \delta \bm{E}(\delta \bm{F}_T\cdot \bm{E}_T) d^3 \bl\\&- \f{1}{|\bl|}\varphi^\varep\bn\cdot\B[\delta \bm{E}(\delta \bm{E} \cdot \delta \bm{F})-\delta \bm{F} (\delta \bm{E}\cdot \delta \bm{E})\B] d^3 \bl \\&=\int_{\mathbb{T}^{3}} \f{\partial }{\partial_{\ell_k}}\B(\varphi^\varep\bn_i\bn_j\B)\delta E_k\delta F_i \delta E_jd^3\bl.
\ea\ee
 Before going further, we set
$$
\bm{E}(\bx+\bl)=\bar{\bm{E}}=(\bar{E}_1,\bar{E}_2,\bar{E}_3 ),  \bm{F}(\bx+\bl)=\bar{\bm{F}}=(\bar{F}_1,\bar{F}_2,\bar{F}_3 ).
$$
A direct calculation shows
\be\label{2.26}\ba
&\int_{\mathbb{T}^{3}} \f{\partial}{\partial_{\ell_k}}\B(\varphi^\varep \bn_i\bn_j\B)\delta E_k\delta F_i \delta E_jd^3\bl\\
=&\int_{\mathbb{T}^{3}} \f{\partial}{\partial_{\ell_k}}\B[\varphi^\varep \bn_i\bn_j\B]\B[E_k(\bx+\bl)-E_k(\bx)\B]\B[F_i(\bx+\bl)-F_i(\bx)\B]
\B[E_j(\bx+\bl)-E_j(\bx)\B]d^3\bl\\
=&\int_{\mathbb{T}^{3}} \f{\partial}{\partial_{\ell_k}}\B(\varphi^\varep \bn_i\bn_j\B)\\&\times\B(\bar{E}_k \bar{F}_i\bar{E}_j -\bar{E}_k\bar{F}_i E_j-\bar{E}_kF_i\bar{E}_j+\bar{E}_kF_iE_j
-E_k\bar{F}_i\bar{E}_j+E_k\bar{F}_iE_j
+E_kF_i\bar{E}_j-E_kF_iE_j\B)d^3\bl.
\ea\ee
Then we will deal with the terms on the right-hand side of \eqref{2.26} on by one. By using  integration by parts and \eqref{2.20}, we get
\be\label{2.27}\ba
 \int_{\mathbb{T}^{3}} \f{\partial}{\partial_{\ell_k}}\B(\varphi^\varep \bn_i\bn_j\B)\bar{E}_k\bar{F}_i\bar{E}_{j}d^3\bl
=&-\int_{\mathbb{T}^{3}} \varphi^\varep \bn_i\bn_j\f{\partial}{\partial_{\ell_k}}\B(\bar{E}_k\bar{F}_i\bar{E}_{j}\B)d^3\bl\\
=&-\f{\partial}{\partial_{x_k}}\int_{\mathbb{T}^{3}} \varphi^\varep \bn_i\bn_j\bar{E}_k\bar{F}_i\bar{E}_{j}d^3\bl\\
=&-\s\B[\bm{E} (\bm{F}_L\cdot \bm{E}_L)\B]^\varep
\ea\ee
Along the same lines as above, we obtain
\be\label{2.28}\ba
&-\int_{_{\mathbb{T}^{3}}} \f{\partial}{\partial_{\ell_k}}\B[\varphi^\varep \bn_i\bn_j\B]\bar{E}_k\bar{F}_i E_{j}d^3\bl=E_j\partial_k(E_k F_{L_j})^\varep,\\
&-\int_{\mathbb{T}^{3}} \f{\partial}{\partial_{\ell_k}}\B[\varphi^\varep \bn_i\bn_j\B]\bar{E}_kF_i\bar{E}_j d^3\bl=F_i\partial_k(E_k E_{L_i})^\varep,\\
&\int_{\mathbb{T}^{3}} \f{\partial}{\partial_{\ell_k}}\B[\varphi^\varep \bn_i\bn_j\B]\bar{E}_kF_iE_j d^3\bl=-F_iE_j\int_{\mathbb{T}^{3}} \f{\partial}{\partial_{\ell_k}}\B[\varphi^\varep \bn_i\bn_j\B]\bar{E}_k d^3\bl=0,\\
&-\int_{\mathbb{T}^{3}} \f{\partial}{\partial_{\ell_k}}\B[\varphi^\varep \bn_i\bn_j\B]E_k\bar{F}_i\bar{E}_jd^3\bl=E_k\partial_k(\bm{F}_L\cdot \bm{E}_L)^\varep=\s [\bm{E}(\bm{F}_L\cdot \bm{E}_L)^\varep],\\
&\int_{\mathbb{T}^{3}} \f{\partial}{\partial_{\ell_k}}\B[\varphi^\varep \bn_i\bn_j\B]E_k\bar{F}_iE_jd^3\bl =
-E_k E_j\partial_k(F_{L_j})^\varep=-E_j\partial_k(E_k F_{L_j}^\varep),\\
&\int_{\mathbb{T}^{3}} \f{\partial}{\partial_{\ell_k}}\B[\varphi^\varep \bn_i\bn_j\B]E_kF_i\bar{E}_jd^3 \bl =-E_k F_i\partial_k E_{L_i}^\varep=-F_i\partial_k(E_k E_{L_i}^\varep),\\
&-\int_{\mathbb{T}^{3}} \f{\partial}{\partial_{\ell_k}}\B[\varphi^\varep \bn_i\bn_j\B]E_kF_iE_j d^3\bl =0.
\ea\ee
Consequently, we see that
\be\ba\label{2.29}
&\int_{\mathbb{T}^{3}} \f{\partial}{\partial_{\ell_k}}\B(\varphi^\varep \bn_i\bn_j\B)\delta E_k\delta F_i \delta E_jd^3\bl\\
=&-\s\B[\big(\bm{E} (\bm{F}_L\cdot \bm{E}_L)\big)^\varep-\bm{E}(\bm{F}_L\cdot \bm{E}_L)^\varep\B]\\
&+E_j\partial_k\B[(E_k F_{L_j})^\varep-(E_k F_{L_j}^\varep)\B]+F_i\partial_k\B[(E_k E_{L_i})^\varep-(E_k E_{L_i}^\varep)\B].
\ea\ee
Inserting \eqref{2.29} into \eqref{2.23}, we arrive at
\be\label{2.30}\ba
&\int_{\mathbb{T}^{3}} \nabla \varphi^\varep(\ell)\cdot \delta \bm{E}(\delta \bm{F}_L \cdot \delta \bm{E}_L)+\f{2}{|\bl|}\varphi^\varep\bn\cdot \delta \bm{E}(\delta \bm{F}_T\cdot \bm{E}_T)\\&- \f{1}{|\bl|}\varphi^\varep\bn\cdot\B[\delta \bm{E}(\delta \bm{E} \cdot \delta \bm{F})-\delta \bm{F} (\delta \bm{E}\cdot \delta \bm{E})\B] d^3 \bl+\s\B[\big(\bm{E} (\bm{F}_L\cdot \bm{E}_L)\big)^\varep-\bm{E}(\bm{F}_L\cdot \bm{E}_L)^\varep\B]\\ &=E_j\partial_k\B[(E_k F_{L_j})^\varep-(E_k F_{L_j}^\varep)\B]+F_i\partial_k\B[(E_k E_{L_i})^\varep-(E_k E_{L_i}^\varep)\B].
\ea\ee
Then we have proved \eqref{2.16}.

To obtain the \eqref{2.17}, following the same path of \eqref{02.22}, we find
 \be\label{2.32}\ba
&\int_{\mathbb{T}^{3}} \nabla \varphi^\varep(\ell)\cdot \delta \bm{E} [\delta \bm{F}_L]^2+\f{2}{|\bl|}\varphi^\varep\bn\cdot \delta \bm{E} [\delta \bm{F}_T]^2 d^3\bl\\
=&\int_{\mathbb{T}^{3}} \f{\partial \varphi^\varep}{\partial_{\ell_k}}\cdot \delta E_k\bn_i\bn_j\delta F_i \delta F_j+\varphi^\varep\bn_k\delta E_k (\f{\partial \bn_i}{\partial\ell_j}+\f{\partial \bn_j}{\partial\ell_i})\delta F_i\delta F_j d^3\bl\\
	=&\int_{\mathbb{T}^{3}} \f{\partial}{\partial_{\ell_k}}\B(\varphi^\varep \bn_i\bn_j\B)\delta E_k\delta F_i\delta F_j -\varphi^\varep\B[\f{\partial}{\partial_{\ell_k}}(\bn_i\bn_j)-(\f{\partial \bn_i}{\partial\ell_j}+\f{\partial \bn_j}{\partial\ell_i})\bn_k\B]\delta E_k \delta F_i\delta F_j d^3\bl\\
	=&\int_{\mathbb{T}^{3}} \f{\partial}{\partial_{\ell_k}}\B(\varphi^\varep \bn_i\bn_j\B)\delta E_k\delta F_i\delta F_j -\f{2}{|\bl|}\varphi^\varep\bn\cdot\B[\delta \bm{F}(\delta \bm{E} \cdot \delta \bm{F})-\delta \bm{E} (\delta \bm{F}\cdot \delta \bm{F})\B]d^3\bl.
\ea\ee
A slight modification of deduction of  \eqref{2.29}, we conclude
that
\be\label{2.33}\ba
 &\int_{\mathbb{T}^{3}} \f{\partial}{\partial_{\ell_k}}\B[\varphi^\varep \bn_i\bn_j\B]\delta E_k\delta F_i\delta F_j  d^3\bl\\
=&-\s \B[\big(\bm{E}(\bm{F}_L\cdot\bm{F}_L)\big)^\varep-\bm{E}(\bm{F}_L\cdot\bm{F}_L)^\varep\B]+2F_j\partial_k\B[(E_k F_{L_j})^\varep-(E_k F_{L_j}^\varep)\B].
\ea\ee
Substituting this into \eqref{2.32}, we arrive at
\be\label{2.330}\ba
&\int_{\mathbb{T}^{3}} \nabla \varphi^\varep(\ell)\cdot \delta \bm{E} [\delta \bm{F}_L]^2+\f{2}{|\bl|}\varphi^\varep\bn\cdot \B[\delta \bm{E} [\delta \bm{F}_T]^2 +\delta \bm{F}(\delta \bm{E} \cdot \delta \bm{F})-\delta \bm{E} (\delta \bm{F}\cdot \delta \bm{F})\B]d^3\bl\\
=&-\s \B[\big(\bm{E}(\bm{F}_L\cdot\bm{F}_L)\big)^\varep-\bm{E}(\bm{F}_L\cdot\bm{F}_L)^\varep\B]+2F_j\partial_k\B[(E_k F_{L_j})^\varep-(E_k F_{L_j}^\varep)\B],
\ea\ee
which  yields that
\be\label{2.34}\ba
&\f12\int_{\mathbb{T}^{3}} \nabla \varphi^\varep(\ell)\cdot \delta \bm{E} [\delta \bm{F}_L]^2+\f{2}{|\bl|}\varphi^\varep\bn\cdot \B[\delta \bm{E} [\delta \bm{F}_T]^2 +\delta \bm{F}(\delta \bm{E} \cdot \delta \bm{F})-\delta \bm{E} (\delta \bm{F}\cdot \delta \bm{F})\B]d^3\bl\\&+\f12
\s \B[\big(\bm{E}(\bm{F}_L\cdot\bm{F}_L)\big)^\varep-\bm{E}(\bm{F}_L\cdot\bm{F}_L)^\varep\B]
\\=& F_j\partial_k\B[(E_k F_{L_j})^\varep-(E_k F_{L_j}^\varep)\B].
\ea\ee
Then the desired equality \eqref{2.17} has been proved.

Regarding the rest equalities \eqref{2.18} and \eqref{2.19}, repeating the derivation of \eqref{02.22} and replacing the application of \eqref{2.20} by
\eqref{2.21}, we arrive at
\be\label{2.35}\ba
&\int_{\mathbb{T}^{3}} \nabla \varphi^\varep(\ell)\cdot \delta \bm{E}(\delta \bm{F}_T\cdot \delta \bm{E}_T)-\f{2}{|\bl|}\varphi^\varep\bn\cdot \delta\bm{E} (\delta\bm{F}_T\cdot \delta\bm{E}_T) d^3\bl\\
=&\int_{\mathbb{T}^{3}} \partial_{\ell_k}\varphi^\varep\cdot\delta E_k (\delta^{ij}-\bn_i\bn_j)\delta F_i\delta E_j -\f{2}{|\bl|}\varphi^\varep\bn_k \cdot\delta E_k(\delta^{ij}-\bn_i\bn_j)\delta F_i \delta E_jd^3\bl\\
=&\int_{\mathbb{T}^{3}} \partial_{\ell_k}\B[\varphi^\varep(\delta^{ij}-\bn_i\bn_j)\B]\delta E_k\delta F_i \delta E_j+\varphi^\varep\B[\partial_{\ell_k}(\bn_i\bn_j)-(\f{\partial \bn_i}{\partial_{\ell_j}}+\f{\partial \bn_j}{\partial_{\ell_i}})\bn_k\B]\delta E_k\delta F_i \delta E_j d^3\bl\\
=&\int_{\mathbb{T}^{3}} \partial_{\ell_k}\B[\varphi^\varep(\delta^{ij}-\bn_i\bn_j)\B]\delta E_k\delta F_i \delta E_j-\f{1}{|\bl|}\varphi^\varep\bn\cdot\B[\delta \bm{E}(\delta\bm{E}\cdot \delta\bm{F})-\delta \bm{F}(\delta \bm{E}\cdot \delta \bm{E})\B] d^3\bl,
\ea\ee
Using the preceding calculations in \eqref{2.26}, we obtain
$$\ba
&\int_{\mathbb{T}^{3}} \partial_{\ell_k}\B[\varphi^\varep(\delta^{ij}-\bn_i\bn_j)\B]\delta E_k\delta F_i \delta E_j d^3\bl\\
=&\int_{\mathbb{T}^{3}} \partial_{\ell_k}\B[\varphi^\varep(\delta^{ij}-\bn_i\bn_j)\B]
\B[\overline{E}_k\overline{F}_{i}\overline{E}_{j}
-\overline{E}_k\overline{F}_{i} {E}_{j}
-\overline{E}_k {F}_{i}\overline{E}_{j}
+\overline{E}_k {F}_{i} {E}_{j}\\
&~~~~~~~~~~~~~~~~~~~~~~~~~~~~~~~~~
- {E}_k\overline{F}_{i}\overline{E}_{j}
+ {E}_k\overline{F}_{i} {E}_{j}
+ {E}_k {F}_{i}\overline{E}_{j}
- {E}_k {F}_{i} {E}_{j} \B]d^3\bl
\ea$$
Integrating by parts, we conclude by \eqref{2.21} that
\be\label{2.36}\ba
 \int_{\mathbb{T}^{3}} \f{\partial}{\partial_{\ell_k}} \B[\varphi^\varep(\delta^{ij}-\bn_i\bn_j)\B]\bar{E}_k\bar{F}_i\bar{E}_{j}
d^3\bl=&-\int_{\mathbb{T}^{3}} \partial_{\ell_k}\B[\varphi^\varep(\delta^{ij}-\bn_i\bn_j)\B]\f{\partial}{\partial_{\ell_k}}\B(\bar{E}_k\bar{F}_i\bar{E}_{j}\B)d^3\bl\\
=&- \int_{\mathbb{T}^{3}} \partial_{\ell_k}\B[\varphi^\varep(\delta^{ij}-\bn_i\bn_j)\B]\bar{E}_k\bar{F}_i\bar{E}_{j}d^3\bl\\
=&-\s\B[\bm{E} (\bm{F}_T\cdot \bm{E}_T)\B]^\varep.
\ea\ee
By the same token, we also have
\begin{align}
&-\int_{_{\mathbb{T}^{3}}} \f{\partial}{\partial_{\ell_k}}\B[\varphi^\varep(\delta^{ij}-\bn_i\bn_j)\B]\bar{E}_k\bar{F}_i E_{j}d^3\bl=E_j\partial_k(E_k F_{T_j})^\varep,\nonumber\\
&-\int_{\mathbb{T}^{3}} \f{\partial}{\partial_{\ell_k}}\B[\varphi^\varep(\delta^{ij}-\bn_i\bn_j)\B]\bar{E}_kF_i\bar{E}_j d^3\bl=F_i\partial_k(E_k E_{T_i})^\varep,\nonumber\\
&\int_{\mathbb{T}^{3}} \f{\partial}{\partial_{\ell_k}}\B[\varphi^\varep(\delta^{ij}-\bn_i\bn_j)\B]\bar{E}_kF_iE_j d^3\bl=-F_iE_j\int_{\mathbb{T}^{3}} \f{\partial}{\partial_{\ell_k}}\B[\varphi^\varep(\delta^{ij}-\bn_i\bn_j)\B]\bar{E}_k d^3\bl=0,\nonumber\\
&-\int_{\mathbb{T}^{3}} \f{\partial}{\partial_{\ell_k}}\B[\varphi^\varep(\delta^{ij}-\bn_i\bn_j)\B]E_k\bar{F}_i\bar{E}_jd^3\bl
=E_k\partial_k(\bm{F}_T\cdot \bm{E}_T)^\varep
=\s [\bm{E}(\bm{F}_T\cdot \bm{E}_T)^\varep],\nonumber\\
&\int_{\mathbb{T}^{3}} \f{\partial}{\partial_{\ell_k}}
\B[\varphi^\varep(\delta^{ij}-\bn_i\bn_j)\B]E_k\bar{F}_iE_jd^3\bl =
-E_k E_j\partial_k(F_{T_j})^\varep=-E_j\partial_k(E_k F_{T_j}^\varep),\nonumber\\
&\int_{\mathbb{T}^{3}} \f{\partial}{\partial_{\ell_k}}\B[\varphi^\varep(\delta^{ij}-\bn_i\bn_j)\B]E_kF_i\bar{E}_jd^3 \bl =-E_k F_i\partial_k E_{T_i}^\varep=-F_i\partial_k(E_k E_{T_i}^\varep),\nonumber\\
&-\int_{\mathbb{T}^{3}} \f{\partial}{\partial_{\ell_k}}\B[\varphi^\varep(\delta^{ij}-\bn_i\bn_j)\B]E_kF_iE_j d^3\bl =0.\nonumber
\end{align}
As a consequence, we find
\be \ba\label{02.39}
 &\int_{\mathbb{T}^{3}} \partial_{\ell_k}\B[\varphi^\varep(\delta^{ij}-\bn_i\bn_j)\B]\delta E_k\delta F_i \delta E_jd^3\bl\\
=&-\s\B[\big(\bm{E} (\bm{F}_T\cdot \bm{E}_T)\big)^\varep-\bm{E}(\bm{F}_T\cdot \bm{E}_T)^\varep\B]\\
&+E_j\partial_k\B[(E_k F_{T_j})^\varep-(E_k F_{T_j}^\varep)\B]+F_i\partial_k\B[(E_k E_{T_i})^\varep-(E_k E_{T_i}^\varep)\B].
\ea\ee
Inserting this into \eqref{2.35}, we know that
\be\label{2.39}\ba
&\int_{\mathbb{T}^{3}} \nabla \varphi^\varep(\ell)\cdot \delta \bm{E}(\delta \bm{F}_T\cdot \delta \bm{E}_T)-\f{2}{|\bl|}\varphi^\varep\bn\cdot \delta\bm{E} (\delta\bm{F}_T\cdot \delta\bm{E}_T)\\&+\f{1}{|\bl|}\varphi^\varep\bn\cdot\B[\delta \bm{E}(\delta\bm{E}\cdot \delta\bm{F})-\delta \bm{F}(\delta \bm{E}\cdot \delta \bm{E})\B] d^3\bl+\s\B[\big(\bm{E} (\bm{F}_T\cdot \bm{E}_T)\big)^\varep-\bm{E}(\bm{F}_T\cdot \bm{E}_T)^\varep\B]\\
=&E_j\partial_k\B[(E_k F_{T_j})^\varep-(E_k F_{T_j}^\varep)\B]+F_i\partial_k\B[(E_k E_{T_i})^\varep-(E_k E_{T_i}^\varep)\B].
\ea\ee
Thus the validity of \eqref{2.18} is  confirmed.

Finally, proceeding as in the proof of
\eqref{2.35}, we have
\be\label{2.40}\ba
&\int_{\mathbb{T}^{3}} \nabla \varphi^\varep(\ell)\cdot \delta \bm{E}[\delta \bm{F}_T]^2-\f{2}{|\bl|}\varphi^\varep\bn\cdot \delta \bm{E} [\delta \bm{F}_T]^2 d^3\bl\\
=&\int_{\mathbb{T}^{3}} \partial_{\ell_k}\varphi^\varep\cdot\delta E_k (\delta^{ij}-\bn_i\bn_j)\delta F_i \delta F_j -\f{2}{|\bl|}\varphi^\varep\bn_k \cdot\delta E_k(\delta^{ij}-\bn_i\bn_j)\delta F_i \delta F_j d^3\bl\\
=&\int_{\mathbb{T}^{3}} \partial_{\ell_k}\B[\varphi^\varep(\delta^{ij}-\bn_i\bn_j)\B]\delta E_k\delta F_i \delta F_j+\varphi^\varep\B[\partial_{\ell_k}(\bn_i\bn_j)-(\f{\partial \bn_i}{\partial_{\ell_j}}+\f{\partial \bn_j}{\partial_{\ell_i}})\bn_k\B]\delta E_k\delta F_i \delta F_j d^3\bl\\
=&\int_{\mathbb{T}^{3}} \partial_{\ell_k}\B[\varphi^\varep(\delta^{ij}-\bn_i\bn_j)\B]\delta E_k\delta F_i \delta F_j+\f{2}{|\bl|}\varphi^\varep\bn\cdot\B[\delta \bm{F}(\delta \bm{E}\cdot \delta \bm{F})-\delta \bm{E}(\delta \bm{F}\cdot \delta \bm{F})\B] d^3\bl.
\ea\ee
A slight variant of the   proof of \eqref{02.39} provides
\be\label{2.41}\ba
& \int_{\mathbb{T}^{3}} \partial_{\ell_k}\B[\varphi^\varep(\delta^{ij}-\bn_i\bn_j)\B]\delta E_k\delta F_i \delta F_j d^3\bl\\
=&-\s \B[\big(\bm{E}(\bm{F}_T\cdot\bm{F}_T)\big)^\varep-\bm{E}(\bm{F}_T\cdot\bm{F}_T)^\varep\B]+2F_j\partial_k\B[(E_k F_{T_j})^\varep-(E_k F_{T_j}^\varep)\B].
\ea\ee
A combination of this and \eqref{2.40}, we end up with
\be\label{2.42}\ba
&\f12\int_{\mathbb{T}^{3}} \nabla \varphi^\varep(\ell)\cdot \delta \bm{E}[\delta \bm{F}_T]^2-\f{2}{|\bl|}\varphi^\varep\bn\cdot \B[\delta \bm{E} [\delta \bm{F}_T]^2  +\delta \bm{F}(\delta \bm{E}\cdot \delta \bm{F})-\delta \bm{E}(\delta \bm{F}\cdot \delta \bm{F})\B]d^3\bl\\&+\f12\s \B[\big(\bm{E}(\bm{F}_T\cdot\bm{F}_T)\big)^\varep-\bm{E}(\bm{F}_T\cdot\bm{F}_T)^\varep\B]
=F_j\partial_k\B[(E_k F_{T_j})^\varep-(E_k F_{T_j}^\varep)\B].
\ea\ee
This achieves the proof of this lemma.
\end{proof}

\section{Exact scaling laws of energy in the EMHD system}
We are in position to start the proof four-fifths laws of the energy to the inviscid EMHD equations.
 \begin{proof}[Proof of Theorem \ref{the1.1}]
Thanks to the definition of \eqref{defiofeyink} and \eqref{EMHD}, we know that
\be\label{3.1}
 \partial_{t}\bm{b}^{\varepsilon}_{L}+
 \text{d}_{\text{I}}\nabla\times[(\nabla\times \bm{b})\times \bm{b}_{L}]^{\varepsilon} =0,\s\bm{b}_{L}=0.
 \ee
Multiplying \eqref{3.1} and \eqref{EMHD} by $\bm{b}$ and $ \bm{b}^{\varepsilon}_{L}$, respectively, we obtain
\be\label{3.2}
 \partial_{t}\bm{b}^{\varepsilon}_{L} \cdot\bm{b} +
 \text{d}_{\text{I}}\nabla\times[(\nabla\times \bm{b})\times \bm{b}_{L}]^{\varepsilon}\cdot\bm{b} =0,\s\bm{b}=0,
 \ee
 \be\label{3.3}
 \partial_{t}\bm{b} \cdot\bm{b}^{\varepsilon}_{L} +
 \text{d}_{\text{I}}\nabla\times[(\nabla\times \bm{b})\times \bm{b}]\cdot\bm{b}_{L}^{\varepsilon} =0,\s\bm{b}=0.
 \ee
By the sum \eqref{3.2} and \eqref{3.3}, we arrive at
 \be\label{3.4}
 \partial_{t}(\bm{b} \cdot\bm{b}^{\varepsilon}_{L})
 +
 \text{d}_{\text{I}}\nabla\times[(\nabla\times \bm{b})\times \bm{b}_{L}]^{\varepsilon}\cdot\bm{b} +
 \text{d}_{\text{I}}\nabla\times[(\nabla\times \bm{b})\times \bm{b}]\cdot\bm{b}_{L}^{\varepsilon} =0
\ee
Notice that
$$\ba &\nabla\times[(\nabla\times \bm{b})\times \bm{b}]\cdot \bm{b}^{\varepsilon}_{L}+ \nabla\times[(\nabla\times \bm{b})\times \bm{b}_{L}]^{\varepsilon}\cdot \bm{b}\\
= &\f12\{2 \nabla\times[(\nabla\times \bm{b})\times \bm{b}]\cdot \bm{b}^{\varepsilon}_{L}+2 \nabla\times[(\nabla\times \bm{b})\times \bm{b}_{L}]^{\varepsilon}\cdot \bm{b}\}.
\ea$$
According to \eqref{non1} and \eqref{non2}, we have
\begin{align}\label{3.5}
 &\nabla\times[(\nabla\times \bm{b})\times \bm{b}_{L}]^{\varepsilon}=
 \s( \bm{b}\otimes\bm{J}_{L})^{\varepsilon}- \s(\bm{J}\otimes  \bm{b}_{L})^{\varepsilon},\\
 \label{3.6}&\nabla\times[(\nabla\times \bm{b})\times \bm{b}_{L}]^{\varepsilon} =\nabla\times[\s(\bm{b} \otimes \bm{b}_{L})]^{\varepsilon}.
\end{align}
In view of \eqref{non1}-\eqref{non4} and $\eqref{VI}_{3}$, we infer that
$$\ba& 2 \nabla\times[(\nabla\times \bm{b})\times \bm{b}]\cdot \bm{b}_{L}^{\varepsilon}\\=&[\s(\bm{b}\otimes\bm{J})- \s(\bm{J}\otimes \bm{b})]\cdot \bm{b}_{L}^{\varepsilon}+\nabla\times[\s(\bm{b} \otimes \bm{b})]\cdot \bm{b}_{L}^{\varepsilon}\\=&
[\s(\bm{b}\otimes\bm{J})- \s(\bm{J}\otimes \bm{b})]\cdot \bm{b}_{L}^{\varepsilon}+\s([\s(\bm{b} \otimes \bm{b})]\times \bm{b}_{L}^{\varepsilon})+[\s(\bm{b} \otimes \bm{b})]\cdot \bm{J}^{\varepsilon}_{L}.
\ea$$
and
$$\ba
&2 \nabla\times[(\nabla\times \bm{b})\times \bm{b}_{L}]^{\varepsilon}\cdot \bm{b}\\=&[\s(\bm{b}\otimes\bm{J}_{L})- \s(\bm{J}\otimes \bm{b}_{L})]^{\varepsilon}\cdot \bm{b}+\nabla\times[\s(\bm{b} \otimes \bm{b}_{L})]^{\varepsilon}\cdot \bm{b}\\
=&[\s(\bm{b}\otimes\bm{J}_{L})- \s(\bm{J}\otimes \bm{b}_{L})]^{\varepsilon}\cdot \bm{b}+\s([\s(\bm{b} \otimes \bm{b}_{L})]^{\varepsilon}\times \bm{b})+[\s(\bm{b} \otimes \bm{b}_{L})]^{\varepsilon} \cdot \bm{J}.
\ea$$
Hence, the second and third terms on the left-hand side of \eqref{3.4} can be rewritten as
\be\ba\label{3.7}
&\text{d}_{\text{I}}\big\{\nabla\times[(\nabla\times \bm{b})\times \bm{b}]\cdot \bm{b}_{L}^{\varepsilon}+ \nabla\times[(\nabla\times \bm{b})\times \bm{b}_{L}]^{\varepsilon}\cdot \bm{b}\big\}\\
=&\f{\text{d}_{\text{I}}}{2}\B\{\s([\s(\bm{b} \otimes \bm{b}_{L})]^{\varepsilon}\times \bm{b})+\s([\s(\bm{b} \otimes \bm{b})]\times \bm{b}^{\varepsilon}_{L})\\&
+\s(\bm{b}\otimes\bm{J})\cdot \bm{b}_{L}^{\varepsilon}+[\s(\bm{b} \otimes \bm{b})]\cdot \bm{J}_{L}^{\varepsilon}+\s(\bm{b}\otimes\bm{J}_{L})^{\varepsilon}\cdot \bm{b}+[\s(\bm{b} \otimes \bm{b}_{L})]^{\varepsilon} \cdot \bm{J}\\&- \s(\bm{J}\otimes \bm{b}_{L})^{\varepsilon}\cdot \bm{b}- \s(\bm{J}\otimes \bm{b})\cdot \bm{b}_{L}^{\varepsilon}\B\}.
\ea\ee
Then we plug  \eqref{3.7} into \eqref{3.4} to obtain that
 \be\ba \label{3.8}
& \partial_{t}( \bm{b}\cdot\bm{b}_{L}^{\varepsilon})+\f{\text{d}_{\text{I}}}{2}\B\{\s([\s(\bm{b} \otimes \bm{b}_{L})]^{\varepsilon}\times \bm{b})+\s([\s(\bm{b} \otimes \bm{b})]\times \bm{b}_{L}^{\varepsilon})\\&+[\s(\bm{b} \otimes \bm{b}_{L})]^{\varepsilon}\cdot \bm{J}+[\s(\bm{b} \otimes \bm{b})]\cdot \bm{J}_{L}^{\varepsilon}+ \s( \bm{b}\otimes\bm{J}_{L})^{\varepsilon} \cdot \bm{b}+ \s( \bm{b}\otimes\bm{J})\cdot\bm{b}_{L}^{\varepsilon}\\&- \s(\bm{J}\otimes \bm{b}_{L})^{\varepsilon}\cdot \bm{b}- \s(\bm{J}\otimes  \bm{b})\cdot \bm{b}_{L}^{\varepsilon}\B\} =0.
\ea\ee
By takig a straightforward computation and using the divergence-free condition, it is easy to check that
\be\ba\label{3.9}
\partial_{k}(b_{k}b_{L_{i}})^{\varepsilon}J_{i}+\partial_{k}(b_{k}b_{i}) J_{L_{i}}^{\varepsilon}=&\partial_{k}(b_{k}b_{i}J_{L_{i}}^{\varepsilon}) +\partial_{k}(b_{k}b_{L_{i}})^{\varepsilon}J_{i}-(b_{k}b_{i}) \partial_{k}J_{L_{i}}^{\varepsilon}\\=&\s\big[\bm{b}(\bm{b}\cdot \bm{J}^{\varepsilon}_L)\big] +J_{i}\partial_{k}(b_{k}b_{L_{i}})^{\varepsilon}- b_{i}  \partial_{k}(b_{k}J_{L_{i}}^{\varepsilon}),\\
\partial_{k}(b_{k}J_{L_{i}})^{\varepsilon}b_{i}+\partial_{k}(b_{k}J_{i}) b_{L_{i}}^{\varepsilon}=&
\partial_{k}(b_{k}J_{i}b _{L_{i}}^{\varepsilon})+\partial_{k}(b_{k}J_{L_{i}})^{\varepsilon}b_{i}-(b_{k}J_{i}) \partial_{k}b_{L_{i}}^{\varepsilon}\\=&
\s\big[\bm{b}(\bm{J}\cdot\bm{b}_L^\varep)\big]+b_{i}\partial_{k}(b_{k}J_{L_{i}})^{\varepsilon}-J_{i} \partial_{k}(b_{k}b_{L_{i}}^{\varepsilon})\\
-\B(\partial_{k}(J_{k}b_{L_{i}})^{\varepsilon}b_{i}+\partial_{k}(J_{k}b_{i})b_{L_{i}}^{\varepsilon}\B)
=&-\partial_{k}(J_{k}b_{i}b_{L_{i}}^{\varepsilon})+(J_{k}b_{i})\partial_{k}b_{L_{i}}^{\varepsilon}-
\partial_{k}(J_{k}b_{L_{i}})^{\varepsilon}b_{i}\\
=&-\s\big[\bm{J}(\bm{b}\cdot\bm{b}_L)^\varep\big]-b_{i}
\partial_{k}\big[(J_{k}b_{L_{i}})^{\varepsilon}- (J_{k}b_{L_{i}}^{\varepsilon})\big].\\
\ea\ee
Substituting \eqref{3.9} into \eqref{3.8}, we see that
 \be\ba\label{3.10}
 & \partial_{t}(\bm{b}_{L}^{\varepsilon}\cdot\bm{b})+\f{\text{d}_{\text{I}}}{2}\s([\s(\bm{b} \otimes \bm{b}_{L})]^{\varepsilon}\times \bm{b})+\f{\text{d}_{\text{I}}}{2}\s([\s(\bm{b} \otimes \bm{b})]\times \bm{b}_{L}^{\varepsilon})\\&+\f{\text{d}_{\text{I}}}{2}\s\big[\bm{b} (\bm{b}\cdot \bm{J}_{L}^{\varepsilon}) +\bm{b} (\bm{b}_{L}^{\varepsilon}\cdot \bm{J})- \bm{J} (\bm{b}\cdot \bm{b}_{L}^{\varepsilon})\big]
 \\&=\f{\text{d}_{\text{I}}}{2}b_{i}
 \partial_{k}\big[(J_{k}b_{L_{i}})^{\varepsilon}- (J_{k}b_{L_{i}}^{\varepsilon})\big]\\&~~-\f{\text{d}_{\text{I}}}{2}J_{i}\partial_{k}[(b_{k}b_{L_{i}})^{\varepsilon}-(b_{k}b_{L_{i}}^{\varepsilon})]-\f{\text{d}_{\text{I}}}{2}b_{i}\partial_{k}\big[(b_{k}J_{L_{i}})^{\varepsilon}-(b_{k}J_{L_{i}}^{\varepsilon})\big].
\ea\ee
On the other hand, by means of \eqref{2.16} and \eqref{2.17} in  Lemma  \ref{lemma2.2}, it follows that
\be\label{3.11}\ba
&\int_{\mathbb{T}^{3}} \nabla \varphi^\varep(\ell)\cdot \delta \bm{b}(\delta \bm{J}_L \cdot \delta \bm{b}_L)+\f{2}{|\bl|}\varphi^\varep\bn\cdot \delta \bm{b}(\bl)(\delta \bm{J}_T\cdot \delta\bm{b}_T)\\&- \f{1}{|\bl|}\varphi^\varep\bn\cdot\B[\delta \bm{b}(\delta \bm{b} \cdot \delta \bm{J})-\delta \bm{J} (\delta \bm{b}\cdot \delta \bm{b})\B] d^3 \bl +\s\B[\big(\bm{b}(\bm{J}_L\cdot \bm{b}_L)\big)^\varep-\bm{b}(\bm{J}_L\cdot \bm{b}_L)^\varep\B]\\&=b_j\partial_k\B[(b_k J_{L_j})^\varep-(b_k J_{L_j}^\varep)\B]+J_i\partial_k\B[(b_k b_{L_i})^\varep-J_i\partial_k(b_k b_{L_i}^\varep)\B],
\ea\ee
  and
  \be\label{3.12}\ba
&\f12\int_{\mathbb{T}^{3}} \nabla \varphi^\varep(\ell)\cdot \delta \bm{J}[\delta \bm{b}_L]^2+\f{2}{|\bl|}\varphi^\varep\bn\cdot \B[\delta \bm{J} [\delta \bm{b}_T]^2 +\delta \bm{b}(\delta \bm{J} \cdot \delta \bm{b})-\delta \bm{J} (\delta \bm{b}\cdot \delta \bm{b})\B]d^3\bl\\&+\f12
\s \B[\big(\bm{J}(\bm{b}_L\cdot\bm{b}_L)\big)^\varep-\bm{J}(\bm{b}_L\cdot\bm{b}_L)^\varep\B]
= b_j\partial_k\B[(J_k b_{L_j})^\varep-(J_k b_{L_j}^\varep)\B].
\ea\ee
 Inserting \eqref{3.11} and \eqref{3.12} into \eqref{3.10}, we have
\be\ba\label{3.14}
 & \partial_{t}(\bm{b}_{L}^{\varepsilon}\cdot\bm{b})+\f{\text{d}_{\text{I}}}{2}\s([\s(\bm{b} \otimes \bm{b}_{L})]^{\varepsilon}\times \bm{b})+\f{\text{d}_{\text{I}}}{2}\s([\s(\bm{b} \otimes \bm{b})]\times \bm{b}_{L}^{\varepsilon})\\
 &+\f{\text{d}_{\text{I}}}{2}\s\big[\bm{b} (\bm{b}\cdot \bm{J}_{L}^{\varepsilon}) +\bm{b} (\bm{b}_{L}^{\varepsilon}\cdot \bm{J})- \bm{J} (\bm{b}\cdot \bm{b}_{L}^{\varepsilon})\big]\\&+\f{\text{d}_{\text{I}}}{2}\s\B[\big(\bm{b}(\bm{J}_L\cdot \bm{b}_L)\big)^\varep-\bm{b}(\bm{J}_L\cdot \bm{b}_L)^\varep\B]
-
\f{\text{d}_{\text{I}}}{4}
\s \B[\big(\bm{J}(\bm{b}_L\cdot\bm{b}_L)\big)^\varep-\bm{J}(\bm{b}_L\cdot\bm{b}_L)^\varep\B]\\
 =&
 -\f{2}{3}D_{EL}^{\varepsilon}(\bm{b},\bm{J}),
\ea\ee
where
$$\ba
&D_{EL}^{\varepsilon}(\bm{b},\bm{J})\\
=&\f{3\text{d}_{\text{I}}}{4 }\int_{\mathbb{T}^{3}} \nabla \varphi^\varep(\ell)\cdot \delta \bm{b}(\delta \bm{J}_L \cdot \delta \bm{b}_L)+\f{2}{|\bl|}\varphi^\varep\B[\bn\cdot \delta \bm{b}(\delta \bm{J}_T\cdot \delta \bm{b}_T)+\delta\bm{b} \times(\delta\bm{J}\times\delta\bm{b})\B]  d^3 \bl\\
&-\f{3\text{d}_{\text{I}}}{8 }\int_{\mathbb{T}^{3}} \nabla \varphi^\varep(\ell)\cdot \delta \bm{J} [\delta \bm{b}_L]^2+\f{2}{|\bl|}\varphi^\varep\bn\cdot  \delta \bm{J}[\delta \bm{b}_T]^2  d^3\bl.
\ea$$
Here, we used vector triple product formula
$$\delta\bm{b} \times(\delta\bm{J}\times\delta\bm{b})=\delta \bm{J} (\delta \bm{b}\cdot \delta \bm{b})-\delta \bm{b}(\delta \bm{J} \cdot \delta \bm{b}).$$

Next, following the path of \eqref{3.10}, we get
\be\ba\label{3.15}
  & \partial_{t}(\bm{b}_{T}^{\varepsilon}\cdot\bm{b})+\f{\text{d}_{\text{I}}}{2}\s([\s(\bm{b} \otimes \bm{b}_{T})]^{\varepsilon}\times \bm{b})+\f{\text{d}_{\text{I}}}{2}\s([\s(\bm{b} \otimes \bm{b})]\times \bm{b}_{T}^{\varepsilon})\\&+\f{\text{d}_{\text{I}}}{2}\s\big[\bm{b} (\bm{b}\cdot \bm{J}_{T}^{\varepsilon}) +\bm{b} (\bm{b}_{T}^{\varepsilon}\cdot \bm{J})- \bm{J} (\bm{b}\cdot \bm{b}_{T}^{\varepsilon})\big]
 \\&=\f{\text{d}_{\text{I}}}{2}b_{i}
 \partial_{k}\big[(J_{k}b_{T_{i}})^{\varepsilon}- (J_{k}b_{T_{i}}^{\varepsilon})\big]\\&~~-\f{\text{d}_{\text{I}}}{2}J_{i}\partial_{k}[(b_{k}b_{T_{i}})^{\varepsilon}-(b_{k}b_{T_{i}}^{\varepsilon})]-\f{\text{d}_{\text{I}}}{2}b_{i}\partial_{k}\big[(b_{k}J_{T_{i}})^{\varepsilon}-(b_{k}J_{T_{i}}^{\varepsilon})\big].
\ea\ee
Besides, in light of  \eqref{2.18} and \eqref{2.19} in Lemma \ref{lemma2.2}, we can deduce that
\be\label{3.16}\ba
&\int_{\mathbb{T}^{3}} \nabla \varphi^\varep(\ell)\cdot \delta \bm{b}(\delta \bm{J}_T\cdot \delta \bm{b}_T)-\f{2}{|\bl|}\varphi^\varep\bn\cdot \delta\bm{b} (\delta\bm{J}_T\cdot \delta\bm{b}_T)+\f{1}{|\bl|}\bn\cdot\B[\delta \bm{b}(\delta\bm{b}\cdot \delta\bm{J})-\delta \bm{J}(\delta \bm{b}\cdot \delta \bm{b})\B] d^3\bl \\&+\s\B[\big(\bm{b} (\bm{J}_T\cdot \bm{b}_T)\big)^\varep-\bm{b}(\bm{J}_T\cdot \bm{b}_T)^\varep\B]\\
=&b_j\partial_k\B[(b_k J_{T_j})^\varep-(b_k J_{T_j}^\varep)\B]+J_i\partial_k\B[(b_k b_{T_i})^\varep-(b_k b_{T_i}^\varep)\B],
\ea\ee
and
\be\label{3.17}\ba
&\f12\int_{\mathbb{T}^{3}} \nabla \varphi^\varep(\ell)\cdot \delta \bm{J}[\delta \bm{b}_T]^2-\f{2}{|\bl|}\varphi^\varep\bn\cdot \delta \bm{J} [\delta \bm{b}_T]^2  -\f{2}{|\bl|}\bn\cdot\B[\delta \bm{b}(\delta \bm{J}\cdot \delta \bm{b})-\delta \bm{J}(\delta \bm{b}\cdot \delta \bm{b})\B]d^3\bl\\ &+\f12\s \B[\big(\bm{J}(\bm{b}_T\cdot\bm{b}_T)\big)^\varep-\bm{J}(\bm{b}_T\cdot\bm{b}_T)^\varep\B]
=b_j\partial_k\B[(J_k b_{T_j})^\varep-(J_k b_{T_j}^\varep)\B].
\ea\ee
Substituting \eqref{3.16} and \eqref{3.17} into \eqref{3.15}, we see that
\be\ba\label{3.19}
 &  \partial_{t}(\bm{b}_{T}^{\varepsilon}\cdot\bm{b})+\f{\text{d}_{\text{I}}}{2}\s([\s(\bm{b} \otimes \bm{b}_{T})]^{\varepsilon}\times \bm{b})+\f{\text{d}_{\text{I}}}{2}\s([\s(\bm{b} \otimes \bm{b})]\times \bm{b}_{T}^{\varepsilon})\\&+\f{\text{d}_{\text{I}}}{2}\s\big[\bm{b} (\bm{b}\cdot \bm{J}_{T}^{\varepsilon}) +\bm{b} (\bm{b}_{T}^{\varepsilon}\cdot \bm{J})- \bm{J} (\bm{b}\cdot \bm{b}_{T}^{\varepsilon})\big]
 \\&- \f{\text{d}_{\text{I}}}{4}\s \B[\big(\bm{J}(\bm{b}_T\cdot\bm{b}_T)\big)^\varep-\bm{J}(\bm{b}_T\cdot\bm{b}_T)^\varep\B]
+\f{\text{d}_{\text{I}}}{2} \s\B[\big(\bm{b} (\bm{J}_T\cdot \bm{b}_T)\big)^\varep-\bm{b}(\bm{J}_T\cdot \bm{b}_T)^\varep\B]\\
=&-\f43D_{ET}^{\varepsilon}(\bm{b},\bm{J}).
\ea\ee
where
\begin{align}
&D_{ET}^{\varepsilon}(\bm{b},\bm{J})\nonumber\\
=&-\f{3\text{d}_{\text{I}}}{16}\int_{\mathbb{T}^{3}} \nabla \varphi^\varep(\ell)\cdot \delta \bm{J}[\delta \bm{b}_T]^2-\f{2}{|\bl|}\varphi^\varep\bn\cdot \delta \bm{J} [\delta \bm{b}_T]^2  d^3\bl\\  &+\f{3\text{d}_{\text{I}}}{8}      \int_{\mathbb{T}^{3}} \nabla \varphi^\varep(\ell)\cdot \delta \bm{b}(\delta \bm{J}_T\cdot \delta \bm{b}_T)-\f{2}{|\bl|}\varphi^\varep\bn\cdot \B[\delta\bm{b} (\delta\bm{J}_T\cdot \delta\bm{b}_T)+\delta \bm{b}\times(\delta \bm{J}\times\delta \bm{b})\B] d^3\bl.\nonumber
\end{align}

With the help of \eqref{1/3}, we deduce from a
change of variables that
$$\ba
\B\|\bm{b}_{L}^{\varepsilon}-\f13  \bm{b}\B\|_{L^{p}(0,T;L^{q}(\mathbb{T}^{3}))}=&
\B\|\int_{\mathbb{T}^{3}} \varphi^\varep(\ell)(\bn\otimes \bn)\cdot\big[\bm{b}(\bx+\bl,t)-\bm{b}(\bx,t)\big]d^3\bl\B\|_{L^{p}(0,T;L^{q}(\mathbb{T}^{3}))}
\\\leq&\int_{\mathbb{T}^{3}} \varphi^\varep(\ell)\|\bm{b}(\bx+\bl,t)-\bm{b}(\bx,t)\|_{L^{p}(0,T;L^{q}(\mathbb{T}^{3}))}d^3\bl
\\=&\int_{\mathbb{T}^{3}} \varphi (\xi)\|\bm{b}(\bx+\varepsilon\xi,t)-\bm{b}(\bx,t)\|_{L^{p}(0,T;L^{q}(\mathbb{T}^{3}))}d^3\xi
\ea$$
The strong-continuity of translation operators on Lebesgue spaces ensures that
$$
\lim_{\varepsilon\rightarrow0}
\|\bm{u}(\bx+\varepsilon\xi,t)-\bm{u}(\bx,t)\|_{L^{p}(0,T;L^{q}(\mathbb{T}^{3}))}=0.
$$
Employing the dominated convergence theorem, we discover that
\be\label{0315}
\lim_{\varepsilon\rightarrow0}\B\|\bm{b}_{L}^{\varepsilon}-\f13  \bm{b}\B\|_{L^{p}(0,T;L^{q}(\mathbb{T}^{3}))}=0.
\ee

Hence, the left hand side of \eqref{3.14} convergences to
$$\f23 \B\{\partial_{t}(\f12|\bm{b}|^{2})  +\f{\text{d}_{\text{I}}}{2}\s\B([\s(\bm{b} \otimes \bm{b})\times \bm{b}]\B)
-\f{\text{d}_{\text{I}}}{4}\s (\bm{j}|\bm{b}|^{2})+\f{\text{d}_{\text{I}}}{2}\s[\bm{b}(\bm{j}\cdot \bm{b})]\B\}$$
in the sense of distribution.

Taking into consideration $\bm{b}_T^\varep=\bm{b}^\varep-\bm{b}_L^\varep$, we conclude by a slight variant of the proof of \eqref{0315}  that
\be\label{0316}
\lim_{\varepsilon\rightarrow0}\B\|\bm{b}_{T}^{\varepsilon}-\f23  \bm{b}\B\|_{L^{p}(0,T;L^{q}(\mathbb{T}^{3}))}=0.
\ee
As a consequence, the left hand side of \eqref{3.19} convergence  to
$$\f43 \B\{\partial_{t}(\f12|\bm{b}|^{2})  +\f{\text{d}_{\text{I}}}{2}\s\B([\s(\bm{b} \otimes \bm{b})\times \bm{b}]\B)
-\f{\text{d}_{\text{I}}}{4}\s (\bm{j}|\bm{b}|^{2})+\f{\text{d}_{\text{I}}}{2}\s[\bm{b}(\bm{j}\cdot \bm{b})]\B\}.
$$
Hence, the validity of \eqref{1.11} is confirmed.

To proceed further, we set
$$\ba
&\overline{S}_{EL}(\bm{b}, \bm{J},\lambda)=\f{\text{d}_{\text{I}}}{\lambda}\int_{\partial B } \bl \cdot \B[  \delta \bm{b}(\lambda\bl)(\delta \bm{b}_L(\lambda\bl)\cdot \delta \bm{J}_L(\lambda\bl))-\f12\delta \bm{J}(\lambda\bl)[\delta \bm{b}_L(\lambda\bl)]^2\B] \f{d\sigma(\bl) }{4\pi},\\
&\overline{S}_{ET} (\bm{b}, \bm{J},\lambda)=\f{\text{d}_{\text{I}}}{\lambda}\int_{\partial B } \bl \cdot \B[  \delta \bm{b}(\lambda\bl)(\delta \bm{b}_T(\lambda\bl)\cdot \delta \bm{J}_T(\lambda\bl))-\f12\delta \bm{J}(\lambda\bl)[\delta \bm{b}_T(\lambda\bl)]^2\B] \f{d\sigma(\bl) }{4\pi},\\
&\overline{S}_{E} (\bm{b}, \bm{J},\lambda)=\f{\text{d}_{\text{I}}}{\lambda}\int_{\partial B }  \bl \cdot\B[\delta\bm{b}(\lambda\bl)\times \B(\delta \bm{J}(\lambda\bl) \times \delta \bm{b}(\lambda\bl)\B)\B] \f{d\sigma(\bl) }{4\pi}.
\ea$$
By coarea formula and the change of variable, we find
\be\ba\label{3.21}
&D_{E}(\bm{b},\bm{J})\\=&\lim_{\varepsilon\rightarrow0}D_{ET}^{\varepsilon}(\bm{b},\bm{J})\\
=&\f{3\text{d}_{\text{I}}}{8}      \lim_{\varepsilon\rightarrow0}\int_{\mathbb{T}^{3}} (\nabla \varphi^\varep(\ell)-\f{2}{|\bl|}\varphi^\varep\bn)\cdot\big[ \delta \bm{b}(\bl)(\delta \bm{J}_T\cdot \delta \bm{b}_T)-\f12\delta \bm{J}(\bl)[\delta \bm{b}_T]^2\big]\\&-\f{3\text{d}_{\text{I}}}{4} \lim_{\varepsilon\rightarrow0}     \int_{\mathbb{T}^{3}}\f{1}{|\bl|}\bn\cdot\big[\delta \bm{b}\times(\delta \bm{J}\times\delta\bm{b})\big]d^3\bl\\
=&
\f{3}{8}\B(\int_0^\infty r^3\varphi^{'}(r)-2r^2\varphi(r)dr\B)4\pi \bar{S}_{ET}(\bm{b},\bm{J})-\f{3}{4}\int_0^\infty r^2\varphi(r)dr 4\pi\bar{S}_{E}(\bm{b},\bm{J})\\
=&-\f{15 }{8}\B(\bar{S}_{ET}(\bm{b},\bm{J})+\f25\bar{S}_{E}(\bm{b},\bm{J})\B)\\
=&-\f{15 }{8} S_{ET}(\bm{b},\bm{J}).
\ea\ee
It follows from \eqref{3.21}  that
$$S_{ET}(\bm{b},\bm{J})= -\f{8}{15} D_{E}(\bm{b},\bm{J}).$$
Moreover, an argument similar to the one used in \eqref{3.21}  shows that
\begin{align}
&D_{E}(\bm{b},\bm{J})\nonumber\\=&\lim_{\varepsilon\rightarrow0}D_{EL}^{\varepsilon}(\bm{b},\bm{J})\\
=&\f{3\text{d}_{\text{I}}}{4}\lim_{\varepsilon\rightarrow0} \int_{\mathbb{T}^{3}} \nabla \varphi^\varep(\ell)\cdot \B[\delta \bm{b}(\delta \bm{J}_L \cdot \delta \bm{b}_L)-\f12 \delta \bm{J} [\delta \bm{b}_L]^2\B]\nonumber\\&+\f{3\text{d}_{\text{I}}}{4}\lim_{\varepsilon\rightarrow0} \int_{\mathbb{T}^{3}}\f{2}{|\bl|}\varphi^\varep\bn\cdot[ \delta \bm{b}(\delta \bm{J}_T\cdot \bm{b}_T)-\f12 \delta \bm{J} [\delta \bm{b}_L]^2]  d^3 \bl+\f32\lim_{\varepsilon\rightarrow0} \int_{\mathbb{T}^{3}}\f{1}{|\bl|}\varphi^\varep\bn\cdot[ \delta\bm{b} \times(\delta\bm{J}\times\delta\bm{b})]  d^3 \bl\nonumber\\=&\f34\int_0^\infty r^3\varphi^{'}(r)dr 4\pi \bar {S}_{EL}(\bm{b},\bm{J})+\f34 \int_0^\infty 2\varphi(r) r^2 dr 4\pi \bar{S}_{ET}(\bm{b},\bm{J})+\f32\int_0^\infty r^2\varphi(r) dr 4\pi \bar{S}_{E}(\bm{b},\bm{J})\nonumber\\
=&-\f94 \bar {S}_{EL}(\bm{b},\bm{J})+\f32 \bar{S}_{ET}(\bm{b},\bm{J})+\f32\bar{S}_{E}(\bm{b},\bm{J}).\label{3.22}
\end{align}
A combination of \eqref{3.21} and \eqref{3.22}, we end up with
\be\label{3.23}\left\{\ba
&D_{E}(\bm{b},\bm{J})=-\f{15}{8} \bar{S}_{ET}(\bm{b},\bm{J})-\f34\bar{ S}_{E}(\bm{b},\bm{J}), \\
&D_{E}(\bm{b},\bm{J})=-\f94 \bar {S}_{EL}(\bm{b},\bm{J})+\f32 \bar{S}_{ET}(\bm{b},\bm{J})+\f32\bar{S}_{E}(\bm{b},\bm{J}).
 \ea\right.\ee
As a consequence, we  have
\be\label{d5}\ba
D_{E}(\bm{b},\bm{J})=-\f54\bar{S}_{EL}(\bm{b},\bm{J})+\f12\bar{S}_{E}(\bm{b},\bm{J})=-\f54 S_{EL}(\bm{b},\bm{J}),
\ea\ee
which means we have the following 4/5 law and 8/15 law of  energy for  EMHD equations
$$S_{EL}=-4/5 D_E(\bm{b},\bm{J})~~\text{and}~~S_{ET}=-8/15 D_E(\bm{b},\bm{J}),$$
where
\begin{equation}
	\ba
S_{EL}=&\lim_{\varepsilon\rightarrow0}\f{\text{d}_{\text{I}}}{\lambda}\int_{\partial B } \bl \cdot \B[  \delta \bm{b}(\lambda\bl)(\delta \bm{b}_L(\lambda\bl)\cdot \delta \bm{J}_L(\lambda\bl))-\f12\delta \bm{J}(\lambda\bl)[\delta \bm{b}_L(\lambda\bl)]^2\B] \f{d\sigma(\bl) }{4\pi}\\
&-\f25\lim_{\varepsilon\rightarrow0}\f{\text{d}_{\text{I}}}{\lambda}	\int_{\partial B }  \bl \cdot\B[\delta\bm{b}(\lambda\bl)\times \B(\delta \bm{J}(\lambda\bl) \times \delta \bm{b}(\lambda\bl)\B)\B] \f{d\sigma(\bl) }{4\pi},
	\ea
\end{equation}
and
\begin{equation}
	\ba
S_{ET}=&	\lim_{\varepsilon\rightarrow0}\f{\text{d}_{\text{I}}}{\lambda}\int_{\partial B } \bl \cdot \B[  \delta \bm{b}(\lambda\bl)(\delta \bm{b}_T(\lambda\bl)\cdot \delta \bm{J}_T(\lambda\bl))-\f12\delta \bm{J}(\lambda\bl)[\delta \bm{b}_T(\lambda\bl)]^2\B] \f{d\sigma(\bl) }{4\pi}\\
&+\f25 \lim_{\varepsilon\rightarrow0}\f{\text{d}_{\text{I}}}{\lambda}\int_{\partial B }  \bl \cdot\B[\delta\bm{b}(\lambda\bl)\times \B(\delta \bm{J}(\lambda\bl) \times \delta \bm{b}(\lambda\bl)\B)\B] \f{d\sigma(\bl) }{4\pi}.
	\ea
\end{equation}
The proof of this theorem is finished.
 \end{proof}

\section{Exact relation of magnetic helicity in the inviscid  Hall-MHD equations}
This section is devoted to  establishing the scaling laws of magnetic helicity in the inviscid Hall-MHD equations.

\begin{proof}[Proof of Theorem  \ref{the1.3}]
Owing to $\eqref{VI}_2$ and  Putting \eqref{hallMHD} and  \eqref{hmpotentialeq} together, we infer that
 \be \label{4.1}\left\{\ba
&\bm{b}_{t}  +\nabla\times(\bm{b}\times \bm{u}) +\text{d}_{\text{I}}\nabla\times[\s(\bm{b} \otimes \bm{b} )] =0, \\
&\bm{A}_{t} -(\bm{u}\times \bm{b}) +\text{d}_{\text{I}}\s(\bm{b} \otimes \bm{b})  + \nabla  \pi=0.
 \ea\right.\ee
  According to \eqref{defiofeyink}, we notice that
\be \label{4.2}\left\{\ba
&\partial_{t}\bm{b}_{X}^{\varepsilon}  +\nabla\times(\bm{b}\times \bm{u}_{X})^{\varepsilon}+\text{d}_{\text{I}}\nabla\times[\s(\bm{b} \otimes \bm{b} _{X})]^{\varepsilon} =0, \\
&\partial_{t}\bm{A}_{X}^{\varepsilon} -(\bm{u}\times \bm{b}_{X})^{\varepsilon}+\text{d}_{\text{I}}\s(\bm{b} \otimes \bm{b}_{X})^{\varepsilon}  +\s  {\bf \Pi}^{\varepsilon}_{X}=0,~\text{with}~X=L,T.
 \ea\right.\ee
 where
\be\label{c7}
{\bf \Pi}^\varep_L=\int_{\mathbb{T}^{3}}  \varphi^\varep(\bl)(\bn\otimes \bn)\pi(\bx+\bl,t)d^3\bl,~\text{and}~{\bf \Pi}^\varep_T=\int_{\mathbb{T}^{3}}  \varphi^\varep(\bl)(\bm{1}-\bn\otimes \bn)\pi(\bx+\bl,t)d^3\bl.
\ee
Before proceeding further, we  claim that
\be\label{c8}
\s {\bf \Pi}^\varep_X(\bx,t)=\nabla \pi_X(\bx,t),~~X=L,T,
\ee
holds in the sense of distributions, where $\pi_L(\bx,t), \pi_T(\bx,t)$ are scalar functions defined  as follows
\be\label{c9}
\pi_X^\varep(\bx,t)=\int_{\mathbb{T}^{3}}  \varphi^\varep_X(\ell)\pi(\bx+\bl,t)d^3\bl,~~X=L,T,
\ee
with
\be\label{c10}
\varphi_L(\ell)=\varphi(\ell)-\varphi_T(\ell),~~~~\varphi_T(\ell)=2\int_{|\bl|}^\infty \f{\varphi(\ell^{'})}{\ell^{'}} d\ell^{'}.
\ee
With the help of  the definition of $\varphi(\ell)$, it is easy to see that $\varphi_L(\ell)$ and $ \varphi_T(\ell)$ defined here  are compactly supported and $C^\infty$ everywhere except at $0$, where they have a mild (logarithmic) singularity.

Now we are in a position to  show the validity of \eqref{c8}. On the one hand, using a straightforward computation, we have the following basic relation
\be\label{c11}\ba
\partial_{\ell_k}\bn_i&=\f{\partial}{\partial\ell_k}\B(\f{\ell_i}{|\bl|}\B)=\f{\delta^{ik}|\bl|-\f{\ell_i\ell_k}{|\bl|}}{|\bl]^2}=\f{1}{|\bl|}(\delta^{ik}-\bn_i\bn_k),
\ea\ee
which together with the integration by parts yields
\be\label{c12}\ba
\s {\bf \Pi}^\varep_L&=\partial_{x_i}\int_{\mathbb{T}^{3}}  \varphi^\varep(\ell)(\bn_i\bn_j)\pi(\bx+\bl) d^3\bl\\
&=\int_{\mathbb{T}^{3}}  \varphi^\varep(\ell)(\bn_i\bn_j)\partial_{\ell_i}\pi(\bx+\bl) d^3\bl\\
&=-\int_{\mathbb{T}^{3}}  \B[\f{d\varphi^\varep(\ell)}{d\ell}\bn_j+\varphi^\varep(\ell)\f{2}{|\bl|}\bn_j\B]\pi(\bx+\bl) d^3\bl.
\ea\ee
 On ther other hand, in light of the  definition of  $\Pi_L^\varep$ and \eqref{c10}, we have
\be\label{c13}\ba
\nabla \varphi^\varep_L(\ell)=\B(\f{d\varphi^\varep}{d\ell}(\ell)+\f{2}{|\bl|}\varphi^\varep(\ell)\B)\bn.
\ea\ee
Combining \eqref{c12}, \eqref{c13} and using the integration by parts again, we can deduce that
\be\label{c14}\ba
\s {\bf \Pi}^\varep_L&=-\int_{\mathbb{T}^{3}}  \partial_{\ell_j}\varphi^\varep_L(\ell)\pi(\bx+\bl,t)d^3\ell\\
&=\int_{\mathbb{T}^{3}} \varphi^\varep_L(\ell)\partial_{x_j}\pi(\bx+\bl,t)d^3\ell\\
&=\partial_{x_j}\int_{\mathbb{T}^{3}} \varphi^\varep_L(\ell)\pi(\bx+\bl,t)d^3\ell=\nabla \pi_L^\varep(\bx).\\
\ea\ee
Similarly, since ${\bf \Pi}_L^\varep+{\bf \Pi}_T^\varep=\pi^\varep{\bf 1}$ and $\pi_L^\varep+\pi_T^\varep=\pi^\varep$, we can also obtain $\s {\bf \Pi}_T^\varep=\nabla \pi_T^\varep$. Then we have proved the claim \eqref{c8}.

The next thing to do in the proof is trying to establish the local   longitudinal and transverse K\'arm\'arth-Howarth equations for magnetic helicity. To this end, letting $X=L$ and dotting $\eqref{4.2}_1$ and $\eqref{4.2}_2$ by $\bm{A}$ and $\bm{b}$, respectively,  we have
\be\label{4.3} \left\{\ba
&\partial_{t}\bm{b}_{L}^{\varepsilon}\cdot\bm{A}  +\nabla\times(\bm{b}\times \bm{u}_{L})^{\varepsilon}\cdot\bm{A} +\nabla\times[\s(\bm{b} \otimes \bm{b} _{L})]^{\varepsilon} \cdot\bm{A}=0, \\
&\partial_{t}\bm{A}_{L}^{\varepsilon}\cdot \bm{b}-(\bm{u}\times \bm{b}_{L})^{\varepsilon}\cdot \bm{b}+\text{d}_{\text{I}}\s(\bm{b} \otimes \bm{b}_{L})^{\varepsilon} \cdot \bm{b} +\bm{b}\cdot\nabla  \pi^{\varepsilon}_{L}=0,
 \ea\right.\ee
Multiplying $\eqref{4.1}_1$ and  $\eqref{4.1}_2$ with $X=L$ by $\bm{A}_{L}^{\varepsilon}$ and $\bm{b}_{L}^{\varepsilon}$, respectively, we can obtain
\be \label{4.4}\left\{\ba
&\bm{b}_{t}\cdot\bm{A}_{L}^{\varepsilon} +\nabla\times(\bm{b}\times \bm{u})\cdot\bm{A}_{L}^{\varepsilon}+\text{d}_{\text{I}}\nabla\times[\s(\bm{b} \otimes \bm{b} )] \cdot\bm{A}_{L}^{\varepsilon}=0, \\
&\bm{A}_{t}\cdot\bm{b}_{L}^{\varepsilon}-(\bm{u}\times \bm{b})\cdot\bm{b}_{L}^{\varepsilon}+\text{d}_{\text{I}}\s(\bm{b} \otimes \bm{b})\cdot\bm{b}_{L}^{\varepsilon}  +\bm{b}_{L}^{\varepsilon}\cdot\nabla  \pi=0,
 \ea\right.\ee
By the sum of \eqref{4.3} and \eqref{4.4}, we infer that
 \be\ba\label{4.5}
&\partial_t(\bm{A}_{L}^{\varepsilon}\bm{b}+\bm{A}\bm{b}_{L}^{\varepsilon}) -(\bm{u}\times \bm{b})\cdot\bm{b}_{L}^{\varepsilon}-(\bm{u}\times \bm{b}_{L})^{\varepsilon}\cdot\bm{b}-\nabla\times(\bm{u}\times \bm{b}_{L})^{\varepsilon}\cdot \bm{A}-\nabla\times(\bm{u}\times \bm{b})\cdot \bm{A}_{L}^{\varepsilon}\\&+
\text{d}_{\text{I}}\s(\bm{b}\otimes \bm{b}_{L})^{\varepsilon}\cdot\bm{b}+\text{d}_{\text{I}}\s(\bm{b}\otimes \bm{b})\cdot\bm{b}_{L}^{\varepsilon}  +\nabla \pi_{L}^{\varepsilon}\bm{b} \\&+\nabla \pi \bm{b}_{L}^{\varepsilon}+\text{d}_{\text{I}}\nabla\times[\s(\bm{b}\otimes \bm{b}_{L})]^{\varepsilon}\cdot\bm{A}+\text{d}_{\text{I}}\nabla\times[\s(\bm{b}\otimes \bm{b})]\cdot\bm{A}_{L}^{\varepsilon}=0.
\ea\ee
On the other hand, by virtue of  $ \eqref{VI}_{3}$, it follows that
\be\label{4.6}\ba
\bm{A}\cdot[\nabla\times(\bm{u}\times \bm{b}_{L})^{\varepsilon}] =&\s[(\bm{u}\times \bm{b}_{L})^{\varepsilon}\times \bm{A}]+(\bm{u}\times \bm{b}_{L})^{\varepsilon}\cdot(\nabla\times \bm{A})
\\=&\s[(\bm{u}\times \bm{b}_{L})^{\varepsilon}\times \bm{A}]+(\bm{u}\times \bm{b}_{L})^{\varepsilon}\cdot \bm{b}.
\ea\ee
and
\be\label{4.7}\ba
\bm{A}_{L}^{\varepsilon}\cdot[\nabla\times(\bm{u}\times \bm{b})]
 =&\s[(\bm{u}\times \bm{b})\times \bm{A}_{L}^{\varepsilon}]+(\bm{u}\times \bm{b})\cdot \bm{b}_{L}^{\varepsilon}.
\ea\ee
In the same manner as above, we obtain
\be\label{4.8}\ba
\bm{A}\cdot\{  \nabla\times[\s(\bm{b}\otimes \bm{b}_{L})]^{\varepsilon}\}=&\s([\s(\bm{b}\otimes \bm{b}_{L})]^{\varepsilon}\times \bm{A})+[\s(\bm{b}\otimes \bm{b}_{L})]^{\varepsilon}\cdot(\nabla\times  \bm{A} )
\\=&\s([\s(\bm{b}\otimes \bm{b}_{L})]^{\varepsilon}\times \bm{A})+[\s(\bm{b}\otimes \bm{b}_{L})]^{\varepsilon}\cdot \bm{b}
\ea\ee
and
\be\label{4.9}\ba
\bm{A}_{L}^{\varepsilon}\cdot\{  \nabla\times[\s(\bm{b}\otimes \bm{b})]\}=&\s([\s(\bm{b}\otimes \bm{b})]\times \bm{A}_{L}^{\varepsilon})+[\s(\bm{b}\otimes \bm{b})]\cdot(\nabla\times \bm{A}_{L}^{\varepsilon})
\\=&\s([\s(\bm{b}\otimes \bm{b})]\times \bm{A}_{L}^{\varepsilon})+[\s(\bm{b}\otimes \bm{b})]\cdot \bm{b}_{L}^{\varepsilon}.
\ea\ee
Plugging \eqref{4.6} and \eqref{4.9} into \eqref{4.5},
we observe that
\be\label{4.10}\ba
&\partial_t(\bm{A}_{L}^{\varepsilon}\cdot \bm{b}+\bm{A}\cdot \bm{b}_{L}^{\varepsilon})-\s[(\bm{u}\times \bm{b}_{L})^{\varepsilon}\times \bm{A}]-2(\bm{u}\times \bm{b}_{L})^{\varepsilon}\cdot \bm{b}-\s[(\bm{u}\times \bm{b})\times \bm{A}_{L}^{\varepsilon}]\\&-2(\bm{u}\times \bm{b})\cdot \bm{b}^{\varepsilon}_{L}+ \text{d}_{\text{I}}\s([\s(\bm{b}\otimes \bm{b}_{L})]^{\varepsilon}\times \bm{A})
 +\text{d}_{\text{I}}\s([\s(\bm{b}\otimes \bm{b})]\times \bm{A}_{L}^{\varepsilon}) +\s[ \pi_L^{\varepsilon}\bm{b}+ \pi \bm{b}_{L}^{\varepsilon}]\\
 =&-2\text{d}_{\text{I}}[ \s(\bm{b}\otimes \bm{b}_{L})^{\varepsilon}\bm{b}+\s(\bm{b}\otimes \bm{b})\bm{b}_{L}^{\varepsilon}].
\ea\ee
In view of Leibniz's formula, we deduce that
\be\ba\label{4.11}
\s(\bm{b}\otimes \bm{b}_{L})^{\varepsilon}\cdot\bm{b}+\s(\bm{b}\otimes \bm{b})\cdot\bm{b}_{L}^{\varepsilon}=&\partial_{k}( {b}_{k}  {b}_{L_{i}})^{\varepsilon} {b}_{i}+\partial_{k}( {b}_{k}  {b}_{i}) {b}_{L_{i}}^{\varepsilon}\\=&
\partial_{k}(b_{k}  b_{i}b_{L_{i}}^{\varepsilon})+\partial_{k}(b_{k}  b_{L_{i}})^{\varepsilon}b_{i}-(b_{k}  b_{i})\partial_{k}b_{L_{i}}^{\varepsilon}\\=&
\s\B[\bm{b}(\bm{b}\cdot\bm{b}_L^\varep)  \B]+b_{i}\partial_{k}\B[(b_{k}  b_{L_{i}})^{\varepsilon}- (b_{k} b_{L_{i}}^{\varepsilon})\B].
\ea\ee
Hence, by substituting \eqref{4.11} into \eqref{4.10}, we have
\be\label{4.12}\ba
&\partial_t(\bm{A}_{L}^{\varepsilon}\cdot \bm{b}+\bm{A}\cdot \bm{b}_{L}^{\varepsilon})-\s[(\bm{u}\times \bm{b}_{L})^{\varepsilon}\times \bm{A}]-2(\bm{u}\times \bm{b}_{L})^{\varepsilon}\cdot \bm{b}-\s[(\bm{u}\times \bm{b})\times \bm{A}_{L}^{\varepsilon}]\\&-2(\bm{u}\times \bm{b})\cdot \bm{b}^{\varepsilon}_{L}+ \text{d}_{\text{I}}\s([\s(\bm{b}\otimes \bm{b}_{L})]^{\varepsilon}\times \bm{A})
+\text{d}_{\text{I}}\s([\s(\bm{b}\otimes \bm{b})]\times \bm{A}_{L}^{\varepsilon})\\
& +\s[ \pi_L^{\varepsilon}\bm{b}+ \pi \bm{b}_{L}^{\varepsilon}]+2\text{d}_{\text{I}}\s\B[\bm{b}(\bm{b}\cdot\bm{b}_L^\varep)  \B]\\
=&-2\text{d}_{\text{I}}b_{i}\partial_{k}\B[(b_{k}  b_{L_{i}})^{\varepsilon}- (b_{k} b_{L_{i}}^{\varepsilon})\B].
\ea\ee
Employing \eqref{2.17} in Lemma \ref{lemma2.2}, we find
 \be\label{4.13}\ba
  &\int_{\mathbb{T}^{3}} \nabla \varphi^\varep(\ell)\cdot \delta \bm{b} [\delta \bm{b}_L]^2+\f{2}{|\bl|}\varphi^\varep\bn\cdot \delta \bm{b} [\delta \bm{b}_T]^2 d^3\bl
   \\& +\s \B[\big(\bm{b}(\bm{b}_L\cdot\bm{b}_L)\big)^\varep-\bm{b}(\bm{b}_L\cdot\bm{b}_L)^\varep\B]
  \\=&2b_j\partial_k\B[(b_k b_{L_j})^\varep-(b_k b_{L_j}^\varep)\B].
   \ea\ee
Combining \eqref{4.13} and \eqref{4.12}, we discover that
\be\ba\label{04.22}
&\partial_t(\bm{A}_{L}^{\varepsilon}\cdot \bm{b}+\bm{A}\cdot \bm{b}_{L}^{\varepsilon})-\s[(\bm{u}\times \bm{b}_{L})^{\varepsilon}\times \bm{A}]-2(\bm{u}\times \bm{b}_{L})^{\varepsilon}\cdot \bm{b}-\s[(\bm{u}\times \bm{b})\times \bm{A}_{L}^{\varepsilon}]\\&-2(\bm{u}\times \bm{b})\cdot \bm{b}^{\varepsilon}_{L}+ \text{d}_{\text{I}}\s([\s(\bm{b}\otimes \bm{b}_{L})]^{\varepsilon}\times \bm{A})
+\text{d}_{\text{I}}\s([\s(\bm{b}\otimes \bm{b})]\times \bm{A}_{L}^{\varepsilon})\\
& +\s[ \pi_L^{\varepsilon}\bm{b}+ \pi \bm{b}_{L}^{\varepsilon}]+2\text{d}_{\text{I}}\s\B[\bm{b}(\bm{b}\cdot\bm{b}_L^\varep)  \B]+\text{d}_{\text{I}}\s \B[\big(\bm{b}(\bm{b}_T\cdot\bm{b}_T)\big)^\varep-\bm{b}(\bm{b}_T\cdot\bm{b}_T)^\varep\B]
\\
   =&-\text{d}_{\text{I}}\int_{\mathbb{T}^{3}} \nabla \varphi^\varep(\ell)\cdot \delta \bm{b} [\delta \bm{b}_L]^2+\f{2}{|\bl|}\varphi^\varep\bn\cdot \delta \bm{b} [\delta \bm{b}_T]^2 d^3\bl\\
   =&-\f43 D_{ML}^{\varepsilon}(\bm{b},\bm{b}),
\ea\ee
where
$$
D_{ML}^{\varepsilon}(\bm{b},\bm{b})=\f34\text{d}_{\text{I}}\int_{\mathbb{T}^{3}} \nabla \varphi^\varep(\ell)\cdot \delta \bm{b} [\delta \bm{b}_L]^2+\f{2}{|\bl|}\varphi^\varep\bn\cdot \delta \bm{b} [\delta \bm{b}_T]^2 d^3\bl.
$$
Along the exact same lines as \eqref{4.12},  we have
\be\label{4.14}\ba
&\partial_t(\bm{A}_{T}^{\varepsilon}\cdot \bm{b}+\bm{A}\cdot \bm{b}_{T}^{\varepsilon})-\s[(\bm{u}\times \bm{b}_{T})^{\varepsilon}\times \bm{A}]-2(\bm{u}\times \bm{b}_{T})^{\varepsilon}\cdot \bm{b}-\s[(\bm{u}\times \bm{b})\times \bm{A}_{T}^{\varepsilon}]\\&-2(\bm{u}\times \bm{b})\cdot \bm{b}^{\varepsilon}_{T}+ \text{d}_{\text{I}}\s([\s(\bm{b}\otimes \bm{b}_{T})]^{\varepsilon}\times \bm{A})
+\text{d}_{\text{I}}\s([\s(\bm{b}\otimes \bm{b})]\times \bm{A}_{T}^{\varepsilon})\\
& +\s[ \pi_T^{\varepsilon}\bm{b}+ \pi \bm{b}_{T}^{\varepsilon}]+2\text{d}_{\text{I}}\s\B[\bm{b}(\bm{b}\cdot\bm{b}_T^\varep)  \B]\\
=&-2\text{d}_{\text{I}}b_{i}\partial_{k}\B[(b_{k}  b_{T_{i}})^{\varepsilon}- (b_{k} b_{T_{i}}^{\varepsilon})\B].
\ea\ee
by virtue of \eqref{2.19} in Lemma \ref{lemma2.2}, we arrive at
\be\label{4.15}\ba
& \int_{\mathbb{T}^{3}} \nabla \varphi^\varep(\ell)\cdot \delta \bm{b}[\delta \bm{b}_T]^2-\f{2}{|\bl|}\varphi^\varep\bn\cdot \delta \bm{b} [\delta \bm{b}_T]^2  d^3\bl + \s \B[\big(\bm{b}(\bm{b}_T\cdot\bm{b}_T)\big)^\varep- \bm{b}(\bm{b}_T\cdot\bm{b}_T)^\varep\B]\\
&=2b_j\partial_k\B[(b_k b_{T_j})^\varep-(b_k b_{T_j}^\varep)\B].
\ea\ee
A combination of \eqref{4.14} and \eqref{4.15}, we know that
\be\ba\label{04.25}
&\partial_t(\bm{A}_{T}^{\varepsilon}\cdot \bm{b}+\bm{A}\cdot \bm{b}_{T}^{\varepsilon})-\s[(\bm{u}\times \bm{b}_{T})^{\varepsilon}\times \bm{A}]-2(\bm{u}\times \bm{b}_{T})^{\varepsilon}\cdot \bm{b}-\s[(\bm{u}\times \bm{b})\times \bm{A}_{T}^{\varepsilon}]\\&-2(\bm{u}\times \bm{b})\cdot \bm{b}^{\varepsilon}_{T}+ \text{d}_{\text{I}}\s([\s(\bm{b}\otimes \bm{b}_{T})]^{\varepsilon}\times \bm{A})
+\text{d}_{\text{I}}\s([\s(\bm{b}\otimes \bm{b})]\times \bm{A}_{T}^{\varepsilon})\\
& +\s[ \pi_T^{\varepsilon}\bm{b}+ \pi \bm{b}_{T}^{\varepsilon}]+2\text{d}_{\text{I}}\s\B[\bm{b}(\bm{b}\cdot\bm{b}_T^\varep)  \B]+ \s \B[\big(\bm{b}(\bm{b}_T\cdot\bm{b}_T)\big)^\varep- \bm{b}(\bm{b}_T\cdot\bm{b}_T)^\varep\B]\\
     =&-\f83D_{MT}^{\varepsilon}(\bm{b},\bm{b}),
\ea\ee
where
$$\ba
D_{MT}^{\varepsilon}(\bm{b},\bm{b})
=&\f38\text{d}_{\text{I}}\int_{\mathbb{T}^{3}}\B[\nabla \varphi^\varep(\ell)-\f{2}{|\bl|}\varphi^\varep\bn\B]\cdot  \delta \bm{b} [\delta \bm{b}_T]^2  d^3\bl.
\ea$$

With \eqref{04.22} and \eqref{04.25} in hand,
mimicking the proof of  \eqref{1.11}, we get
\eqref{01.20}. Next, we write
$$\ba
& {S}_{ML}(\bm{b}, \bm{b},\lambda)=\f{\text{d}_{\text{I}}}{\lambda}\int_{\partial B } \bn \cdot \B[    \delta \bm{b} (\lambda\bl)[\delta \bm{b}_L(\lambda\bl)]^2 \B] \f{d\sigma(\bl) }{4\pi},\\
& {S}_{MT}(\bm{b}, \bm{b},\lambda)=\f{\text{d}_{\text{I}}}{\lambda}\int_{\partial B } \bn \cdot \B[    \delta \bm{b} (\lambda\bl)[\delta \bm{b}_T(\lambda\bl)]^2 \B] \f{d\sigma(\bl) }{4\pi}.
\ea$$
By polar coordinates and the change of variable,  we find
\be\ba\label{4.16}
D_{M}(\bm{b},\bm{b})=&\lim_{\varepsilon\rightarrow0}
D^{\varepsilon}_{ET}(\bm{b},\bm{b})\\
=&\lim_{\varepsilon\rightarrow0}\f38\B(\int_0^\infty r^3\varphi^{'}(r)-2r^2\varphi(r)dr\B)4\pi {S}_{MT}(\bm{b},\bm{b},\varepsilon\bl)\\
  =&-\f{15}{8}  {S}_{MT}(\bm{b},\bm{b}).
\ea\ee
Likewise,
$$\ba
D_{ML}(\bm{b},\bm{b})=&
\lim_{\varepsilon\rightarrow0}D_{ML}^{\varepsilon}(\bm{b},\bm{b})
\\
 =&\f34\text{d}_{\text{I}}\lim_{\varepsilon\rightarrow0}\int_{\mathbb{T}^{3}} \nabla \varphi^\varep(\ell)\cdot  \delta \bm{b} [\delta \bm{b}_L]^2  +\f34\text{d}_{\text{I}}\lim_{\varepsilon\rightarrow0}\int_{\mathbb{T}^{3}}\f{2}{|\bl|}\varphi^\varep\bn\cdot \delta \bm{b} [\delta \bm{b}_T]^2  d^3\bl
 \\=&\f34\int_0^\infty r^3\varphi^{'}(r)dr 4\pi   {S}_{ML}(\bm{b},\bm{b},\varepsilon\bl)+\f34 \lim_{\varepsilon\rightarrow0}\int_0^\infty 2\varphi(r) r^2 dr 4\pi  {S}_{MT} (\bm{b},\bm{b},\varepsilon\bl)\\
=&-\f94   {S}_{ML}(\bm{b},\bm{b})+\f32  {S}_{MT}(\bm{b},\bm{b}).
  \ea$$
This together with \eqref{4.16} implies that
  \be\label{4.17}\left\{\ba
&D_{M}(\bm{b},\bm{b})=-\f{15}{8}  {S}_{MT}(\bm{b},\bm{b}) ,\\
&D_{M}(\bm{b},\bm{b})=-\f94  {S}_{ML}(\bm{b},\bm{b})+
\f32 {S}_{MT}(\bm{b},\bm{b}),
  \ea\right.\ee
which turns out that
$$D_{M}(\bm{b},\bm{b})=-\f54 {S}_{ML}(\bm{b},\bm{b}).$$
Moreover,  from \eqref{4.16}, we deduce that
$$D_{M}(\bm{b},\bm{b})=-\f{8}{15} {S}_{MT}(\bm{b},\bm{b}).$$
This completes the proof of the scaling law of this theorem.
\end{proof}

\section{Concluding remarks}
The Kolmogorov law in \cite{[Kolmogorov]} and Yaglom law  in \cite{[Yaglom]} are rare rigorous results in turbulence. The exact scaling laws of conserved quantities such as energy, cross helicity and magnetic helicity  in physical spaces paly an important  role in the study of the plasma turbulence.
EMHD and Hall-MHD equations are more suitable than the standard MHD equations on scales below the
ion inertial length. The Yaglom type law of the Hall-MHD equations was found by Hellinger,  Verdini,  Landi, Franci  and  Matteini  in \cite{[HVLFM]} (see also \cite{[FGSMA],[WC]}). For the Kolmogorov type 4/5 law, it is shown that this kind law of magnetic
helicity in the EMHD equations by Chkhetiani in \cite{[Chkhetiani]}.

The purpose of the current paper is to consider the 4/5 laws of energy, magnetic
helicity and  generalized helicity    in the EMHD and Hall MHD equations. Indeed, the Kolmogorov type law of energy and cross helicity in  the classical MHD equations were recently derived in \cite{[YWC]}. The proof relies on
  Eyink's  velocity decomposition in \cite{[Eyink1]} and
the    analysis of the  interaction of different physical quantities. The nonlinear term of the EMHD equations is the Hall-term based on second order derivative rather than the   convection term in terms of first order derivative in the traditional MHD equations, which brings more difficulties.
 In the spirit of work \cite{[YWC],[Eyink1]}, making full use of structure of  the Hall term, for the energy in the  EMHD
 equations, we have
$$S_{EL} (\bm{b},\bm{J})= -\f45 D_{E}(\bm{b},\bm{J}),$$
which corresponds  to
$$\langle\delta\bm{b}_{L} (\delta\bm{J}_{L}\cdot \delta\bm{b}_{L})\rangle-\f12\langle\delta\bm{J}_{L} [\delta\bm{b}_{L}|^{2}\rangle-\f25\langle\delta\bm{J}_{L} [\delta\bm{b} |^{2}\rangle+\f25\langle\delta\bm{b}_{L}  (\delta\bm{J} \cdot \delta\bm{b} )\rangle=-\f45\epsilon_{E} \bm{r}.$$
  As a byproduct of this and the scaling law of energy in the MHD equations obtained in \cite{[YWC]}, we get
   $$\ba
\langle \delta \bm{u}_{L}  [\delta \bm{u}_{L}|^{2}  \rangle &+  \langle \delta \bm{u}_{L} [\delta \bm{h}_{L} |^{ 2}  \rangle  -2\langle \delta \bm{h}_{L} (\delta \bm{h}_{L} \cdot\delta \bm{u}_{L} )
\rangle-\f45\langle \delta \bm{h}_{L}(\delta \bm{h} \cdot\delta \bm{v} )  \rangle+\f45\langle \delta \bm{v}_{L}[\delta \bm{h} | ^{2}  \rangle\\&+\langle\delta\bm{b}_{L} (\delta\bm{J}_{L}\cdot \delta\bm{b}_{L})\rangle-\f12\langle\delta\bm{J}_{L} [\delta\bm{b}_{L}|^{2}\rangle-\f25\langle\delta\bm{J}_{L} [\delta\bm{b} |^{2}\rangle+\f25\langle\delta\bm{b}_{L}  (\delta\bm{J} \cdot \delta\bm{b} )\rangle=-\f45\epsilon_{E}\bm{r}.
\ea$$
  For the magnetic helicity of the Hall-MHD equations, there holds
   $$\langle[ \delta \bm{u}_{L}(\bm{r})]^{3} \rangle=-\f45\epsilon_{M}\bm{r}.$$
A  similar argument      shows that, for the generalized helicity,
$$\ba
&\langle \delta \bv_{L} (\delta \bv_{L} \cdot\delta \bomega_{L} )  \rangle- \f12\langle \delta \bomega_{L} (\delta \bv_{L} )^{ 2}  \rangle  +2\langle \delta \bv_{L} (\delta \bm{h}_{L} \cdot\delta \bv_{L} )
\rangle -\langle \delta \bm{h}_{L}  (\delta \bv_{L})^{2} \rangle \\&-\f25\langle  \delta\bomega_{L}(\delta\bv)^{2}\rangle+ \f25\langle  \delta\bv_{L}(\delta\bv\cdot\delta\bomega)\rangle -\f45\langle \delta \bm{h}_{L} ( \delta \bv )^{2}
\rangle+\f45\langle \delta \bm{v}_{L} ( \delta \bv\cdot \delta \bm{h}  )
\rangle=-\f45\epsilon_{H} \bm{r}.
\ea$$

 Finally, we would like to summary the scaling laws of conserved  quantity in hydrodynamic fluids and plasma fluids \cite{
[Chkhetiani2],[Eyink1],[GPP], [HVLFM],[YWC],[Yaglom],[PP1],[MY],[Kolmogorov]} as follows.
 \renewcommand{\arraystretch}{1.7}
 \begin{table}[H] \centering
\begin{tabular}{|c|c|c|c|c|c|}

 \hline
\diagbox{conserved}{Model}&Euler	&MHD	&EMHD	&HMHD \\ \hline
Energy&	 $ \frac{4}{5}, \frac43, \f{8}{15} $  &	  $\frac{4}{5}, \f43, \f{8}{15} $  	&  $\frac{4}{5}, \f43, \f{8}{15} $ &$\frac{4}{5}, \f43, \f{8}{15} $	\\ \hline
(Cross) Helicity	&$\frac{4}{5}, \f43, \f{8}{15}, \f{2}{15}$	&$\frac{4}{5}, \f43, \f{8}{15} $&	$-\hspace{-0.2cm}-\hspace{-0.3cm}-\hspace{-0.3cm}-$	&	$-\hspace{-0.2cm}-\hspace{-0.3cm}-\hspace{-0.3cm}-$	 \\ \hline
Magnetic helicity	&$-\hspace{-0.2cm}-\hspace{-0.3cm}-\hspace{-0.3cm}-$&	$-\hspace{-0.2cm}-\hspace{-0.3cm}-\hspace{-0.3cm}-$&	$\frac{4}{5}, \f43, \f{8}{15}, \f{2}{15}$	&$\frac{4}{5}, \f43, \f{8}{15} $ \\ \hline
	\end{tabular}
\caption{Exact laws in   fluids}
 \end{table}

 \section*{Acknowledgement}
 Wang was partially supported by  the National Natural
 Science Foundation of China under grant (No. 11971446, No. 12071113   and  No.  11601492) and  sponsored by Natural Science Foundation of Henan (No. 232300421077). Ye was partially sponsored by National Natural Science Foundation of China (No. 11701145)
and Natural Science Foundation of Henan (No. 232300420111).


\begin{thebibliography}{00}













 \bibitem{[AB]}
A. Alexakis and L. Biferale.
Cascades and transitions in turbulent flows. Physics Reports, 767 (2018), 1--101.

\bibitem{[Chandrasekhar]} S. Chandrasekhar.
The invariant theory of isotropic turbulence in magneto-hydrodynamics.
Proc. R. Soc. Lond. A 204 (1951), 435--449.

 \bibitem{[AOAZ]}
R. Antonia, M. Ould-Rouis, F. Anselmet and Y. Zhu.
Analogy between predictions of Kolmogorov
and Yaglom.  J. Fluid Mech.,  332 (1997),   395--409.


 \bibitem{[BG]}
S. Banerjee and S. Galtier.
Chiral exact relations for helicities in Hall magnetohydrodynamic turbulence.
Physical Review E., 93 (2016), 033120.

\bibitem{[BCPW]}
J. Bedrossian, M. Coti Zelati, S. Punshon-Smith and F. Weber. A Sufficient Condition for the
Kolmogorov 4/5 Law for Stationary Martingale Solutions to the 3D Navier-Stokes Equations.
Commun. Math. Phys. 367  (2019),  1045--1075.

\bibitem{[Biskamp99]}
D. Biskamp, E. Schwarz, F. Zeiler, A. Celani and J. F. Drake. Electron magnetohydrodynamic turbulence. Phys, Plasmas, 6 (1999),  751--758.







\bibitem{[Chen16]} C. H. Chen. Recent progress in astrophysical plasma turbulence from solar wind observations, J. Plasma Phys., 82 (2016), 535820602.


\bibitem{[Chkhetiani]}
O. Chkhetiani, On the third moments in helical turbulence. JETP Lett., 63 (1996), 808--812.
\bibitem{[Chkhetiani2]}O. Chkhetiani,
On triple correlations in isotropic electronic
magnetohydrodynamic turbulence. JETP Lett., 69 (1999), 664--668.

\bibitem{[Chkhetiani1]}O.  Chkhetiani, On the local structure of helical turbulence.  Doklady Physics.,  53 (2008),  513--516.

\bibitem{[Cho11]} J. Cho, Magnetic helicity conservation and inverse energy cascade
in electron magnetohydrodynamic wave packets. Physical Review Letters, 106 (2011), 191104.
\bibitem{[ChoLaz]} J. Cho and A. Lazarian, Simulations of electron magnetohydrodynamic turbulence. The Astrophysical Journal, 701 (2009), 236.






 \bibitem{[DGSH]}
V. David, S. Galtier, F. Sahraoui, and L. Z. Hadid,
Energy Transfer, Discontinuities, and Heating in the Inner Heliosphere Measured with a
Weak and Local Formulation of the Politano-Pouquet Law. The Astrophysical Journal, 927 (2022), 10pp


 \bibitem{[Dubrulle]}B. Dubrulle,
Beyond Kolmogorov cascades.
J. Fluid Mech.,   867 (2019), 1--63.
\bibitem{[DR]}
J. Duchon and R. Robert,  Inertial energy fissipation for weak solutions of incompressible Euler and
Navier-Stokes equations. Nonlinearity, 13 (2000), 249--255.

 \bibitem{[DE]}
T. D. Drivas and G. L. Eyink,
An Onsager Singularity Theorem for
Leray Solutions of Incompressible Navier-Stokes. Nonlinearity. 32 (2019),   4465--4482.
\bibitem{[Drivas]}   T. D. Drivas.
Self-regularization in turbulence from the
Kolmogorov 4/5-law and alignment. Phil.
Trans. R. Soc. A 380 (2022), 20210033, 1--15.



\bibitem{[Eyink1]}G. Eyink, Local 4/5-law and energy dissipation anomaly in turbulence. Nonlinearity, 16 (2003),
137--145.
\bibitem{[Eyink4]}
G.   Eyink. Turbulence theory, course notes, 2007-2008, available at http://www.ams.jhu.edu/
\~{}eyink/Turbulence/notes.html.

\bibitem{[FGSMA]}
R. Ferrand, S. Galtier, F. Sahraoui, R. Meyrand, N. Andr\'es and S. Banerjee,
On exact laws in incompressible Hall magnetohydrodynamic turbulence.
The Astrophysical Journal, 881(2019), 6pp.



\bibitem{[Frisch]}
U. Frisch, Turbulence. Cambridge:
Cambridge Univ. Press.  1995



\bibitem{[Galtier06]}  S. Galtier,  Wave turbulence in incompressible Hall magnetohydrodynamics.
J. Plasma Phys., 72 (2006), 721--769.

\bibitem{[Galtier0]}
S. Galtier,
von K\'arm\'an-Howarth equations for Hall magnetohydrodynamic flows
Phys. Rev. E., 77 (2008), 015302(R).

\bibitem{[Galtier]}
S. Galtier,
On the origin of the energy dissipation anomaly in
(Hall) magnetohydrodynamics. J. Phys. A: Math. Theor., 51 (2018), 205501.


\bibitem{[Galtier1]}
S. Galtier, Introduction to Modern Magnetohydrodynamics. Cambridge:
Cambridge Univ. Press.  2016.
\bibitem{[Galtier2]}S. Galtier, Physics of Wave Turbulence. Cambridge:
Cambridge Univ. Press.  2022.
\bibitem{[GTW]}
Z. Gao, Z. Tan and G. Wu, Energy dissipation for weak solutions of incompressible
MHD equations. Acta Math. Sci. Ser. B (Engl. Ed.) 33 (2013), 865--871.
\bibitem{[GP92]} P. Goldreich and A. Reisenegger, Magnetic field decay in isolated neutron stars. Astrophysical Journal, 395 (1992), 250--258.
\bibitem{[GPP]}
T. Gomez, H. Politano and A. Pouquet, Exact relationship for third-order structure functions in helical flows.
Phys. Rev. E., 61 (2000), 5321--5325.
\bibitem{[GKR]}
A. V. Gordeev, A.S. Kingsep and L.I. Rudakov, Electron magnetohydrodynamics.  Physics Reports, 243 (1994), 215--315.

\bibitem{[HVLFM]}
P. Hellinger, A. Verdini, S. Landi  , L. Franci  and L. Matteini,
von K\'arm\'an-Howarth Equation for Hall Magnetohydrodynamics: Hybrid Simulations. The Astrophysical Journal Letters, 857 (2018), 5pp.


\bibitem{[KCY]}
A. S. Kingsep,  K. V. Chukbar,   and V. V.  Yan'kov, in Reviews of Plasma
Physics. Vol. 16, ed. B. Kadomtsev (New York: Consultants Bureau), 1990.



\bibitem{[Kolmogorov]}
A. N. Kolmogorov, Dissipation of energy in the locally isotropic turbulence. Dokl. Adad.
Nauk SSSR 32(1941). English transl. Proc. R. Soc. Lond. A, 434 (1991), 15--17.

\bibitem{[KP22]} M. Kono and H. L. P\'{e}cseli, Cascade conditions in electron magneto-hydrodynamic turbulence. Physics of Plasmas, 29 (2022), 122305.

 \bibitem{[KAM]}
 D. Kuzzay, O. Alexandrova and L. Matteini,
Local approach to the study of energy transfers in incompressible magnetohydrodynamic turbulence. Physical Review E., 99 (2019), 053202, 1--20.
 \bibitem{[KFD]}
D. Kuzzay, D. Faranda and B. Dubrulle. Global vs local energy dissipation: The energy cycle of the turbulent von K\'arm\'an flow.
Physics of fluids. 27 (2015), 075105, 1--21.
 \bibitem{[Lindborg]}
E. Lindborg,  A note on Kolmogorov's third-order structure-function law, the local isotropy hypothesis and the pressure-velocity correlation, J. Fluid Mech. 326 (1996), 343--356.
\bibitem{[Lyut13]} M. Lyutikov. Electron magnetohydrodynamics: dynamics and turbulence. Physical Review E., 88 (2013), 053103.

\bibitem{[MS]}R. Marino and L. Sorriso-Valvo.
Scaling laws for the energy transfer in space plasma turbulence.
Physics Reports, 1006 (2023), 1--144.

\bibitem{[MY]}
 A. S.  Monin and A. M. Yaglom. Statistical Fluid Mechanics: Mechanics of Turbulence. Vol. 2.
The MIT Press. 1975
\bibitem{[Novack]}
M. Novack.
Scaling laws and exact results in deterministic flows. arXiv:2310.01375.
\bibitem{[PFLVMH]}
E. Papini, L. Franci, S. Landi, A. Verdini, L. Matteini and P. Hellinger. Can Hall magnetohydrodynamics explain plasma turbulence at sub-ion scales? The Astrophysical Journal, 870 (2019), 52.
 \bibitem{[PP1]}
H. Politano and A.  Pouquet.   Dynamical length scales for turbulent magnetized flows. Geophys. Res. Lett., 25 (1998), 273.

 \bibitem{[PP2]}H. Politano and A.  Pouquet. von K\'arm\'an-Howarth equation for magnetohydrodynamics and its consequences on third-order longitudinal structure
 and correlation functions. Phys. Rev. E., 57 (1998), R21.

\bibitem{[Podesta]}J. Podesta. Laws for third-order moments in homogeneous anisotropic incompressible magnetohydrodynamic
turbulence. J. Fluid Mech., 609 (2008), 171--194.

\bibitem{[PFS]} J. Podesta, M. Forman and C. Smith. Anisotropic form of third-order moments and relationship to the cascade
rate in axisymmetric magnetohydrodynamic turbulence. Physics of Plasmas, 14 (2007), 092305.

\bibitem{[Vainshtein73]} S. I. Vainshtein. Strong plasma turbulence at helicon frequencies. Sov. Phys.-JETP., 37 (1973), 73-76.


\bibitem{[WWY1]}
Y. Wang, W. Wei and Y. Ye.
Yaglom's law and conserved quantity dissipation in turbulence.
  arXiv:2301.10917v2.
\bibitem{[WC]}
 Y.	Wang and O. Chkhetiani.  Four-thirds law of energy and magnetic helicity in electron and Hall magnetohydrodynamic fluids. Phys. D 454 (2023), Paper No. 133835.



 \bibitem{[YWC]}Y. Ye, Y. Wang and O. Chkhetiani.
 Four-fifths laws in incompressible and magnetized fluids:
Helicity, Energy and Cross-helicity. 2024.

 \bibitem{[Yaglom]}
A. M. Yaglom. Local Structure of the Temperature Field in a Turbulent Flow. Dokl. Akad. Nauk SSSR., 69 (1949), 743--746.
\bibitem{[YRS]}
T. A. Yousef, F. Rincon  and A. A. Schekochihin.
Exact scaling laws and the local structure of
isotropic magnetohydrodynamic turbulence.
J. Fluid Mech.,  575 (2007),   111--120.


 \bibitem{[Turner]}
 L. Turner. Hall effects on magnetic relaxation. IEEE Trans. Plasma Sci., 14 (1986),
849--857.


\bibitem{[ZYZ]} J. Z. Zhu, W. Yang and G. Y. Zhu, Purely helical absolute equilibria and chirality of (magneto) fluid turbulence, J. Fluid Mech., 739 (2014), 479--501.









\end{thebibliography}
\end{document}